\begin{document}
\newtheorem*{mainlemma}{Main Lemma}
\newtheorem{thm}{Theorem}
\newtheorem{cor}{Corollary}
\newtheorem{lem}{Lemma}
\newtheorem{slem}{Sublemma}
\newtheorem{prop}{Proposition}
\newtheorem{defn}{Definition}
\newtheorem{conj}{Conjecture}
\newtheorem{ques}{Question}
\newtheorem{claim}{Claim}
\newcounter{constant} 
\newcommand{\newconstant}[1]{\refstepcounter{constant}\label{#1}}
\newcommand{\useconstant}[1]{C_{\ref{#1}}}
\title{On harmonic morphisms from $4$-manifolds to Riemann surfaces and local almost Hermitian structures}
\author{Ali Makki, Marina Ville}
\date{}
\maketitle
\begin{abstract}
We investigate the structure of a harmonic morphism $F$ from a Riemannian $4$-manifold $M^4$ to a $2$-surface $N^2$ near a critical point $m_0$. If $m_0$ is an isolated critical point or if $M^4$ is compact without boundary, we show that $F$ is pseudo-holomorphic  w.r.t. an almost Hermitian structure defined in a neighbourhood of $m_0$. \\
If $M^4$ is compact without boundary, the singular fibres of $F$ are branched minimal surfaces.
\end{abstract}
\section{Introduction}
\subsection{Background}
A harmonic morphism $F:M\longrightarrow N$ between two Riemannian manifolds $(M,g)$ and $(N,g)$ is a map which pulls back local harmonic functions on $N$ to local harmonic functions on $M$. Although harmonic morphisms can be traced back to Jacobi, their study in modern times was initiated by Fuglede and Ishihara who characterized them using the notion of horizontal weak conformality, or semiconformality: 
\begin{defn}\label{defn:horconf} (see [B-W] p.46)
Let $F:(M,g)\longrightarrow (N,h)$ be a smooth map between Riemannian manifolds and let $x\in M$. Then $F$ is called  horizontally weakly conformal at $x$ if either\\
1) $dF_x=0$\\
2) $dF_x$ maps the space $Ker(dF_x)^{\perp}$ conformally onto $T_{F(x)}N$, i.e. there exists a number $\lambda(x)$ called the dilation of $F$ at $x$ such that
$$\forall X,Y\in Ker(dF_x)^{\perp}, h(dF_x(X),dF_x(X))=\lambda^2(x)g(X,Y).$$  
The space $Ker(dF_x)$ (resp. $Ker(dF_x)^{\perp}$) is called the vertical (resp. horizontal) space at $x$.
\end{defn}
Fuglede and Ishihara proved independently
\begin{thm}\label{thm:fugledeishihara}([Fu],[Is])
Let $F:(M,g)\longrightarrow (N,h)$ be a smooth map between Riemannian manifolds. The following two statements are equivalent:\\
1) For every harmonic function $f:V\longrightarrow\mathbb{R}$ defined on an open set $V$ of $N$, the function $f\circ F$ defined on the open set $F^{-1}(V)$ of $M$ is harmonic.\\
2) The map $F$ is harmonic and horizontally weakly  conformal.\\
Such a map is called a harmonic morphism.
\end{thm}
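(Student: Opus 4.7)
The plan is to treat the two implications separately. The direction $(2)\Rightarrow(1)$ is essentially the chain rule for the Laplacian, while $(1)\Rightarrow(2)$ is a pointwise algebraic statement obtained by feeding $(1)$ with enough well-chosen harmonic test functions on $N$.

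For $(2)\Rightarrow(1)$, I would start from the standard composition formula for a smooth map $F:(M,g)\to(N,h)$ and a function $f$ on $N$:
$$\Delta_M(f\circ F)=\mathrm{tr}_g\bigl(F^{*}\nabla df\bigr)+df(\tau(F)),$$
where $\tau(F)$ is the tension field. Fix $x\in M$ and pick an orthonormal frame $\{e_1,\dots,e_m\}$ at $x$ adapted to the splitting $T_xM=\ker dF_x\oplus(\ker dF_x)^{\perp}$. The vertical frame vectors are killed by $dF_x$; horizontal weak conformality turns the remaining trace into $\lambda^2(x)$ times the trace of $\nabla df$ on $T_{F(x)}N$, so
$$\Delta_M(f\circ F)(x)=\lambda^2(x)\,(\Delta_N f)(F(x))+df_{F(x)}(\tau(F)_x).$$
If $F$ is harmonic and $f$ is harmonic on $V$, the right-hand side vanishes, giving $(1)$.

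For $(1)\Rightarrow(2)$, fix $x_0\in M$ and set $y_0=F(x_0)$. The same formula, applied at $x_0$ to any $f$ harmonic near $y_0$, yields
$$0=\sum_{i=1}^{m}(\nabla df)_{y_0}\bigl(dF(e_i),dF(e_i)\bigr)+df_{y_0}(\tau(F)_{x_0}). \qquad (\ast)$$
The key technical input is a realisation lemma: for any $\xi\in T^{*}_{y_0}N$ and any symmetric bilinear form $H$ on $T_{y_0}N$ with $\mathrm{tr}_h H=0$, there exists a function $f$ harmonic on a neighbourhood of $y_0$ with $df_{y_0}=\xi$ and $(\nabla df)_{y_0}=H$; one builds it in normal coordinates at $y_0$ by starting with the polynomial $f_0(y)=\xi_a y^a+\tfrac12 H_{ab}y^a y^b$, whose Laplacian vanishes at $y_0$, and correcting by solving a local Dirichlet problem for the cubic error. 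Plugging such $f$ into $(\ast)$ with $H=0$ and $\xi$ arbitrary forces $\tau(F)_{x_0}=0$, so $F$ is harmonic; then with $\xi=0$ and $H$ an arbitrary trace-free symmetric tensor one gets $\langle H,G\rangle_h=0$, where $G$ is the contravariant symmetric $2$-tensor on $T_{y_0}N$ with components $G^{ab}=g^{ij}\partial_iF^a\partial_jF^b$, i.e.\ the image of an orthonormal frame at $x_0$ summed against itself under $dF_{x_0}$. Vanishing of this pairing for every trace-free $H$ forces $G=\lambda^2(x_0)\,h^{-1}$, which unwinds to the dichotomy $dF_{x_0}=0$ or $dF_{x_0}|_{(\ker dF_{x_0})^{\perp}}$ is a conformal isomorphism onto $T_{y_0}N$, namely horizontal weak conformality at $x_0$.

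The main obstacle is the realisation lemma; once one has harmonic functions with prescribed $1$-jet and prescribed trace-free Hessian at a point, the rest reduces to chain-rule bookkeeping and a short linear-algebra step. This local solvability of the Laplace equation is exactly where Fuglede and Ishihara place the analytic weight of their proofs.
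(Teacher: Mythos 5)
The paper does not prove this statement: it is quoted as the Fuglede--Ishihara theorem with references to [Fu] and [Is] only, so there is no in-paper argument to compare against. Your proposal is the standard proof from those sources (and from Chapter 4 of [B-W]): the composition law $\Delta_M(f\circ F)=\mathrm{tr}_g\,\nabla df(dF,dF)+df(\tau(F))$ gives $(2)\Rightarrow(1)$ once one notes that horizontal weak conformality turns the trace over an adapted frame into $\lambda^2(x)\,\Delta_N f(F(x))$, and $(1)\Rightarrow(2)$ follows by testing against harmonic functions with prescribed $1$-jet and prescribed trace-free Hessian at $y_0$, which is exactly how Fuglede argues; the final linear-algebra step ($\langle H,G\rangle=0$ for all trace-free $H$ forces $G=\lambda^2h^{-1}$, hence $dF_{x_0}=0$ or conformality on the horizontal space) is correct. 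The only point needing more care is your one-line construction in the realisation lemma: the harmonic correction obtained by solving a Dirichlet problem for the error $\Delta f_0$ does not automatically have vanishing $2$-jet at $y_0$, so it may perturb the prescribed $\xi$ and $H$; the standard remedy (Fuglede's paper, or [B-W] Prop.\ 4.2.3) is to use Schauder estimates on balls of shrinking radius to show the correction's $2$-jet becomes arbitrarily small and then conclude by an openness or perturbation argument that every admissible jet is attained. Since you explicitly placed the analytic weight of the theorem on that lemma, this is a refinement of your sketch rather than a conceptual gap.
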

When the target is $2$-dimensional, Baird and Eells proved
\begin{thm}\label{theo:B-E}([B-E])
Let $F:(M^m,g)\longrightarrow (N^2,h)$ be a smooth nonconstant horizontally weakly conformal map between a Riemannian manifold $(M^m,g)$ and a Riemannian $2$-surface $(N^2,h)$. Then $F$ is harmonic (hence a harmonic morphism) if and only if the fibres of $F$ at regular points are minimal submanifolds of $M$.
\end{thm}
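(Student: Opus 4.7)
The plan is to derive, at regular points of $F$, the tension field formula
$$\tau(F) \;=\; -(n-2)\,dF\bigl(\mathrm{grad}^{\mathcal{H}}\log\lambda\bigr) \;-\; (m-n)\,dF(\hat{H}),$$
where $n = \dim N$, $\lambda$ is the dilation, and $\hat{H}$ is the mean curvature vector of the fibre through the point. Setting $n=2$ annihilates the first term and reduces the harmonic equation $\tau(F)=0$ to $dF(\hat{H})=0$, which at regular points is equivalent to minimality of the fibre.

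First I would work on the regular locus $\mathcal{R}=\{x\in M:dF_x\neq 0\}$, which is open, and choose around $x\in\mathcal{R}$ an adapted orthonormal frame $\{e_1,e_2,v_3,\ldots,v_m\}$ with $e_1,e_2$ spanning $\mathcal{H}$ and $v_\alpha$ spanning $\mathcal{V}$, together with an orthonormal frame $\{f_1,f_2\}$ near $F(x)$ normalized so that $dF(e_i)=\lambda f_i$. Then
$$\tau(F)=\sum_{i=1}^{2}(\nabla dF)(e_i,e_i)+\sum_{\alpha=3}^{m}(\nabla dF)(v_\alpha,v_\alpha).$$

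The vertical sum collapses quickly: since $dF(v_\alpha)=0$, each term equals $-dF\bigl((\nabla_{v_\alpha}v_\alpha)^{\mathcal{H}}\bigr)$, and recognising the trace of the fibre's second fundamental form produces $-(m-2)\,dF(\hat{H})$. The horizontal sum is the delicate step: expanding $\nabla^F_{e_i}(\lambda f_i)=e_i(\lambda)f_i+\lambda\,\nabla^N_{dF(e_i)}f_i$ and subtracting $dF(\nabla_{e_i}e_i)$, one exploits the orthonormality of $\{f_1,f_2\}$ and the antisymmetry of the Levi-Civita connection of $N$ to isolate the factor $(n-2)$ in front of $dF(\mathrm{grad}^{\mathcal{H}}\log\lambda)$. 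For $n=2$ this factor kills the contribution, leaving
$$\tau(F)=-(m-2)\,dF(\hat{H})\qquad\text{on }\mathcal{R}.$$

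The equivalence in the theorem now reads off: $dF$ is injective on $\mathcal{H}$ at regular points and $\hat{H}$ is horizontal (being normal to the vertical fibre), so $\tau(F)=0$ on $\mathcal{R}$ is equivalent to $\hat{H}=0$, i.e.\ the regular fibres being minimal. To extend harmonicity from $\mathcal{R}$ to all of $M$, I note that on the interior of $M\setminus\mathcal{R}$ the map $F$ is locally constant and so $\tau(F)=0$ trivially, while on the topological boundary $\tau(F)=0$ follows by continuity from $\mathcal{R}$. The main obstacle is the horizontal sum: the vertical piece is essentially a rewriting of the definition of mean curvature, but the cancellation producing precisely the coefficient $(n-2)$ is what requires the only nontrivial computation, and it is exactly this phenomenon that makes $n=2$ distinguished.
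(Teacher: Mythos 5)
This theorem is quoted in the paper from [B-E] without proof, so there is no internal argument to compare against; your proposal reconstructs the standard proof, namely the fundamental equation for horizontally weakly conformal maps, $\tau(F)=-(n-2)\,dF(\mathrm{grad}^{\mathcal H}\log\lambda)-(m-n)\,dF(\hat H)$ (cf.\ [B-W], Chapter 4), specialised to $n=2$, and it is correct. The key points — injectivity of $dF$ on $\mathcal H$ at regular points, horizontality of $\hat H$, and extending $\tau(F)=0$ over the critical set via continuity on $\overline{\mathcal R}$ and local constancy on the interior of the critical set — are all handled soundly; the only part left as a sketch is the cancellation producing the coefficient $(n-2)$ in the horizontal sum, which is exactly the standard computation in [B-E]/[B-W].
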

It follows from Th.\ref{theo:B-E}  that holomorphic maps from a K\"ahler manifold to a Riemann surface are harmonic morphisms; this raises the question of the interaction between harmonic morphisms to surfaces and holomorphic maps. John Wood studied harmonics morphisms $F:M^4\longrightarrow N^2$ from an Einstein $4$-manifold $M^4$ to a Riemann surface $N^2$ and exhibited an {\it integrable} Hermitian structure $J$ on the regular points of $F$ w.r.t. which $F$ is holomorphic ([Wo]). He extended $J$ to some of the critical points of $F$ and the second author extended it to all critical points ([Vi1]).\\ By contrast, Burel constructed many harmonic morphisms from $\mathbb{S}^4$ to $\mathbb{S}^2$, for non-canonical metrics on $\mathbb{S}^4$ ([Bu]); he was building upon previous constructions on product of spheres by Baird and Ou ([B-O]). Yet it is well-known that $\mathbb{S}^4$ does not admit any global almost complex structure (see for example [St] p.217).
\subsection{The results}
In the present paper, we continue along the lines of [Wo] and [Vi1] and investigate the case of a harmonic morphism $F:M^4\longrightarrow N^2$ from a general Riemannian $4$-manifold $M^4$ to a $2$-surface $N^2$. In [Wo] the integrability of $J$ follows from the Einstein condition so we cannot expect to derive an integrable Hermitian structure in the general case. Could $F$ be pseudo-holomorphic w.r.t. some {\it almost} Hermitian structure $J$ on $M^4$? Burel's example on $\mathbb{S}^4$ tells us that we cannot in general expect $J$ to be defined on all of $M^4$: the most we can expect is for $F$ to be pseudo-holomorphic w.r.t. a local almost Hermitian structure. We feel that this should be true in general; however, we only are able to prove it in two cases:
\newconstant{Z}
\begin{thm}\label{thm:main}
Let $(M^4,g)$ be a Riemannian $4$-manifold, let $(N^2,h)$ be a Riemannian $2$-surface and let $F:M^4\longrightarrow N^2$ be a harmonic morphism. Consider a critical point $m_0$ in $M^4$ and assume that one of the following assertions is true\\
1) $m_0$ is an isolated critical point of $F$\\ OR\\
2) $(M^4,g)$ is compact without boundary (and $m_0$ need not be isolated).\\
Then there exists an almost Hermitian structure $J$ in a neighbourhood of $m_0$ w.r.t. which $F$ is pseudo-holomorphic.
\end{thm}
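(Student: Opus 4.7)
My plan is to construct the almost Hermitian structure $J$ first on the open set $M^\circ := M^4\setminus\mathrm{Crit}(F)$ of regular points, and then extend it across the critical points in a neighbourhood of $m_0$. On $M^\circ$ the construction is canonical: at $x\in M^\circ$ horizontal weak conformality gives the orthogonal decomposition $T_xM = H_x\oplus V_x$ with $H_x = \mathrm{Ker}(dF_x)^\perp$ and $V_x = \mathrm{Ker}(dF_x)$ both $2$-dimensional, and $dF_x$ conformal on $H_x$. I would define $J|_{H_x}$ to be the unique isometric complex structure on $H_x$ satisfying $dF_x\circ J = J_N\circ dF_x$ (well-defined thanks to horizontal weak conformality), and, after fixing a local orientation of $M^4$ near $m_0$, take $J|_{V_x}$ to be the compatible rotation by $\pi/2$. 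The resulting $J$ is smooth and almost Hermitian on $M^\circ$ and makes $F$ pseudo-holomorphic. Everything therefore reduces to extending $J$ smoothly to a full neighbourhood of $m_0$.

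\emph{Case 1: $m_0$ isolated.} I would view $J$ as a section of the twistor bundle $Z\to M^4$ (the $\mathbb{S}^2$-bundle of orthogonal almost complex structures compatible with the local orientation), defined on $B\setminus\{m_0\}$ for a small ball $B$ about $m_0$. Trivialising $Z|_B \cong B\times\mathbb{S}^2$, the section becomes a smooth map $\varphi\colon B\setminus\{m_0\}\to\mathbb{S}^2$, and the task reduces to showing $\varphi$ extends continuously to $m_0$. The central input I would try to establish is a local normal form for $F$ near the critical point in the spirit of [Vi1]: in carefully chosen coordinates $F$ should be a perturbation of an explicit polynomial harmonic morphism for which the horizontal and vertical $2$-planes $H_x,V_x$ possess well-defined limits as $x\to m_0$. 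Reading off those limits produces $\lim_{x\to m_0}\varphi(x)$, and elliptic regularity applied to the pseudo-holomorphy equation $dF\circ J = J_N\circ dF$ upgrades the continuous extension to a smooth one.

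\emph{Case 2: $M^4$ compact.} Here the critical set through $m_0$ need not reduce to a point, so a single local model will not suffice. Compactness together with Th.~\ref{theo:B-E} should identify the singular fibre through $m_0$ as a branched minimal surface $\Sigma$, locally a $2$-dimensional branched submanifold of $M^4$. I would then perform a parametrised version of the analysis of Case~1 in the $2$-plane normal to $\Sigma$ at each of its (smooth) points, using compactness of $M^4$ to obtain uniform bounds. These uniform estimates are what should allow the pointwise limits of $\varphi$ along $\Sigma$ to be glued into a smooth section of $Z$ over an entire neighbourhood $U$ of $m_0$.

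\emph{Hardest step.} The main obstacle in both cases is the local normal form of $F$ at a critical point: controlling the rate at which the planes $H_x$ and $V_x$ rotate as $x\to m_0$ (respectively, as $x$ approaches $\Sigma$) requires using the full harmonic morphism equation---horizontal conformality combined with minimality of regular fibres via Th.~\ref{theo:B-E}. Once this geometric control is in hand the limit of $J$ exists and smoothness follows by a standard elliptic bootstrap. In Case~2 the extra difficulty is performing the analysis uniformly along the whole critical surface, which is precisely where compactness is needed.
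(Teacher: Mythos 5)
Your construction on the regular set is the same as the paper's, and your Case 1 strategy (reduce to extending a twistor-bundle section over $B\setminus\{m_0\}$, using that $F$ is a perturbation of its symbol) is in the right spirit, but one intermediate claim is false as stated: the horizontal and vertical planes $H_x,V_x$ do \emph{not} in general have well-defined limits as $x\to m_0$. Already for the model symbol $P_0(z_1,z_2)=z_1z_2$ the vertical plane of the fibre $\{z_1z_2=c\}$ tends to different planes along different directions of approach to the origin. What is true, and what the paper's Main Lemma proves, is that the almost complex structure converges: every limiting position of the vertical plane is a $J_0$-complex line because the symbol is $J_0$-holomorphic, and quantitatively $|J(m)-J_0(m)|=O(|m|)$, obtained by playing the lower bound $\lambda(m)\geq C|m|^{k-1}$ for the dilation against the $O(|m|^k)$ bound on $d(F-P)$. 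So your limit exists, but not for the reason you give; you must argue at the level of $J$, not of the individual planes. Also, the final ``elliptic bootstrap to smoothness'' is unjustified: $J$ is determined algebraically by $dF$ at regular points and is unconstrained by the pseudo-holomorphy equation at points where $dF=0$, so there is no elliptic equation for $J$ to bootstrap; the paper only obtains (and only needs) a continuous extension.

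Case 2 contains the genuine gap. You start by ``identifying the singular fibre through $m_0$ as a branched minimal surface $\Sigma$,'' but Theorem \ref{theo:B-E} gives minimality only of the \emph{regular} fibres; that the singular fibre is a branched minimal surface is precisely the Corollary the paper extracts at the end of its Case 2 argument, not an available input. The paper's mechanism is global: lift the nearby regular fibres $S_n=F^{-1}(u_n)$ to ${\mathcal J}$-holomorphic curves in the twistor space (Eells--Salamon), prove a uniform area bound for these lifts (equality of the areas of the $S_n$ via the first variation formula, the Chen--Tian identity for $\int\|\nabla\gamma_n\|^2$, plus compactness of $M^4$ to bound curvature and fix the topological terms $\chi(S_n)$ and $[S_n].[S_n]$), apply Gromov compactness to get a limit cusp-curve $C=\Gamma+\sum q_iZ_{p_i}$, use the Main Lemma to exclude a bubbled twistor fibre over $m_0$ (Lemma \ref{lem:nobubbles}), and then build $J$ from $\Gamma$, first in $\mathbb{P}(Z(M^4))$ via the antipody Lemma \ref{lem:antipody} and a continuity argument across the singular fibre. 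Your proposed ``parametrised Case 1 analysis in the normal planes to $\Sigma$'' offers no substitute for these steps: the restriction of $F$ to a normal $2$-plane is not a harmonic morphism, the critical set through a non-isolated $m_0$ has no a priori structure, and pointwise limits produced by local models along $\Sigma$ do not glue into a continuous section over a full neighbourhood without something playing the role of the Gromov-limit curve and the antipodal identification. As written, Case 2 assumes the hardest conclusions and omits the compactness-of-holomorphic-curves machinery that makes the proof work.
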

NB. The  pseudo-holomorphicity of $F$ means: if $m\in M^4$ and $X\in T_m M^4$, 
$$dF(JX)=j\circ dF(X)$$
where $j$ denotes the complex structure on $N^2$.\\
\\
We can use the work of [McD] and [M-W] on pseudo-holomorphic curves: 
under the assumptions of Th. \ref{thm:main}, the local topology of a singularity of a fibre of $F$ is the same as the local topology of a singular complex curve in $\mathbb{C}^2$.\\
\\
We derive from the proof of Th. \ref{thm:main}
\begin{cor}
Let $F:M^4\longrightarrow N^2$ be a harmonic morphism from a compact Riemannian $4$-manifold without boundary to a Riemann surface and let $u_0$ be a singular value of $F$. Then the preimage $F^{-1}(u_0)$ is a (possibly  branched) minimal surface.
\end{cor}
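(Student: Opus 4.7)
The strategy is to combine Theorem~\ref{thm:main}(2) with Baird--Eells (Theorem~\ref{theo:B-E}) and the local structure theory of pseudo-holomorphic curves of McDuff and Micallef--White. Write $C(F) \subset M^4$ for the critical set of $F$. On $F^{-1}(u_0) \setminus C(F)$ the fibre is smooth and minimal by Theorem~\ref{theo:B-E}, so the whole task is local around each $m_0 \in F^{-1}(u_0) \cap C(F)$.

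\medskip

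At such a point $m_0$, Theorem~\ref{thm:main} provides a neighbourhood $U$ and an almost Hermitian structure $J$ on $U$ with respect to which $F$ is pseudo-holomorphic. On $U \setminus C(F)$ the vertical distribution $\ker dF$ is a $J$-invariant $2$-plane field tangent to the fibres, so the regular part of $F^{-1}(u_0) \cap U$ is a $J$-holomorphic curve, and by closure $F^{-1}(u_0) \cap U$ is a $J$-holomorphic subvariety. The local structure theorem of Micallef--White then applies: in suitable $C^1$-coordinates vanishing at $m_0$ this subvariety admits a continuous parametrization
\[
\varphi : \mathbb{D} \longrightarrow U, \qquad \varphi(z) = (z^k,\psi(z)), \qquad \psi(z) = o(|z|^k),
\]
which is a smooth immersion off the origin. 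In particular each critical point of $F$ lying on the fibre is an isolated branch point of multiplicity $k$; compactness of $M^4$ forces only finitely many of them to occur on $F^{-1}(u_0)$.

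\medskip

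Minimality becomes a removable-singularity question. Away from the branch points, $F^{-1}(u_0)$ is a smooth minimal surface by Theorem~\ref{theo:B-E}; at a branch point the parametrization $\varphi$ is Lipschitz, satisfies the minimal-surface system weakly on $\mathbb{D}\setminus\{0\}$, and has asymptotics of precisely the Gulliver--Osserman--Royden type, so $\varphi$ extends as a branched minimal immersion on all of $\mathbb{D}$. Piecing the local parametrizations together produces the claimed (possibly branched) minimal surface structure on $F^{-1}(u_0)$. The main obstacle is to ensure that the $J$ supplied by Theorem~\ref{thm:main} is regular enough for the Micallef--White normal form to apply, and that at $m_0$ no extra component of the fibre can sneak in beyond the one captured by $\varphi$; both points should be verifiable from the explicit construction of $J$ carried out in the proof of Theorem~\ref{thm:main}, since that construction is tailored to encode the full local branching behaviour of $F$.
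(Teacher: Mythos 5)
There is a genuine gap, and it sits exactly where you flag it yourself and then wave away. The almost Hermitian structure $J$ supplied by Theorem~\ref{thm:main}(2) is only \emph{continuous}: the Main Lemma gives $|J(m)-J_0(m)|={\mathcal O}(|m|)$ and the compact case produces $J$ as a limit, via Gromov compactness in the twistor space, with no control whatsoever on derivatives of $J$ across the singular fibre. The local structure theory of [McD] and [M-W] that your argument leans on requires an almost complex structure of at least Lipschitz/$C^1$ regularity to produce the normal form $\varphi(z)=(z^k,\psi(z))$; with a merely continuous $J$ neither the normal form nor the isolation of branch points follows, so the core of your local analysis is not available. The same problem infects your minimality step: the claim that $\varphi$ solves the minimal surface system weakly near the branch point and extends by a Gulliver--Osserman--Royden type removable singularity is asserted, not proved, and it would require at least local finiteness of area of the singular fibre and weak conformality/harmonicity of the parametrization across the puncture --- none of which you establish. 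Finally, ``by closure $F^{-1}(u_0)\cap U$ is a $J$-holomorphic subvariety'' presupposes that the set-theoretic fibre is the closure of its regular part, which is part of what has to be shown.

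The paper avoids all of this by deriving the Corollary from the \emph{proof} of Theorem~\ref{thm:main}(2) rather than from its statement, and by working upstairs in the twistor space where the Eells--Salamon structure ${\mathcal J}$ is smooth. Regular values $u_n\to u_0$ give fibres $S_n$ of constant area; the Chen--Tian identity (\ref{nablaJplus}) plus compactness of $M^4$ bounds the areas of the twistor lifts $\tilde S_n$, so Gromov compactness yields a cusp-curve limit $C=\Gamma+\sum_i q_iZ_{p_i}$ as in (\ref{eq:courbeC}); openness of harmonic morphisms gives $\pi(C)=F^{-1}(u_0)$ (Lemma~\ref{lem:piC}), hence $\pi(\Gamma)=F^{-1}(u_0)$, and since $\Gamma$ is a non-vertical ${\mathcal J}$-holomorphic curve, Theorem~\ref{thm:eellssalamon} says its projection is a (possibly branched) minimal surface. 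This gives minimality at the singular points for free, globally, without ever needing regularity of the local $J$ on $M^4$. If you want to salvage your route, you would have to prove independently that the $J$ of Theorem~\ref{thm:main} has the regularity the McDuff/Micallef--White theory needs, which is precisely what the paper does not claim; the efficient fix is to argue through the twistor limit curve as above.
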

If the manifold $M^4$ is Einstein, the Hermitian structure constructed by Wood is parallel on the fibres of the harmonic morphism and  has a fixed orientation. In the general case, around regular points of $F$, there are two local almost Hermitian structures making $F$ pseudo-holomorphic; they have opposite orientations and we denote them $J_+$ and $J_-$. We follow Wood's computation without assuming $M^4$ to be Einstein and get a a bound on the  product of the $\|\nabla J_{\pm}\|$'s (we had hoped for a local bound on one of the $\|\nabla J_{\pm}\|$'s):
\begin{prop}\label{prop:majo}
Let  $(M^4,g)$ be a Riemannian $4$-manifold, let $(N^2,h)$ be a Riemannian $2$-surface and let $F:M^4\longrightarrow N^2$ be a harmonic morphism. We denote by $j$ the complex structure on $N^2$ compatible with the metric and orientation. For a regular point $m$ of $F$, we let $J_+$ (resp. $J_-$) be the almost complex structure on $T_mM^4$ such that \\
i) $J_+$ and $J_-$ preserve the metric $g$\\
ii) $J_+$ (resp. $J_-$) preserves (resp. reverses) the orientation on $T_mM^4$\\
iii) the map $dF:(T_mM^4, J_{\pm})\longrightarrow (N^2,j)$ is complex-linear.\\
Let $K$ be a compact subset of $M^4$: there exists a constant $A$ such that, for every regular point $m$ of $M^4$ in $K$ and every unit vertical tangent vector $T$ at $m$,
$$\|\nabla_T J_+\| \|\nabla_T J_-\| \leq A$$
when $\nabla$ denotes the connection induced by the Levi-Civita connection on $M^4$.
\end{prop}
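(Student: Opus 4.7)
The plan is to imitate Wood's computation in [Wo] for Einstein $4$-manifolds, retaining the curvature terms that do not vanish without the Einstein hypothesis.

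At a regular point $m$, choose an oriented orthonormal frame $(E_1, E_2, V_1, V_2)$ on a neighbourhood of $m$, with $E_1, E_2$ spanning $H = (\ker dF)^{\perp}$, $V_1, V_2$ spanning $V = \ker dF$, and $J_+ E_1 = E_2$, $J_+ V_1 = V_2$; then necessarily $J_- E_1 = E_2$ and $J_- V_1 = -V_2$. Let $A, B \in \mathrm{End}(TM)$ be defined by $A|_H = J_+|_H$, $A|_V = 0$, $B|_V = J_+|_V$, $B|_H = 0$, so that $J_\pm = A \pm B$. A direct algebraic expansion then gives
\[
\|\nabla_T J_+\|^2 \|\nabla_T J_-\|^2 = \bigl(\|\nabla_T A\|^2 + \|\nabla_T B\|^2\bigr)^2 - 4\langle \nabla_T A, \nabla_T B\rangle^2,
\]
so the task reduces to bounding the right-hand side uniformly in regular $m \in K$ and unit vertical $T$.

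I would then compute $\nabla_T A$ and $\nabla_T B$ in terms of the connection one-forms $\omega_{ij}(T) = g(\nabla_T e_i, e_j)$ of the adapted frame. Horizontal weak conformality (Definition \ref{defn:horconf}) together with minimality of the regular fibres (Theorem \ref{theo:B-E}) allow the mixed one-forms $\omega_{ij}$ to be expressed in terms of the O'Neill $A$- and $T$-tensors of the submersion, the gradient of $\log\lambda$ (where $\lambda$ is the dilation), and the second fundamental form of the fibres. Following the bookkeeping of [Wo], the contributions involving $d\log\lambda$ --- those liable to diverge as $\lambda \to 0$ near critical points --- enter $\nabla_T A$ and $\nabla_T B$ with matched tensorial structure. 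In the Einstein setting of [Wo] they assemble into $\nabla_T J_+ = 0$; in the general setting they cancel in the specific quadratic combination $(\|\nabla_T A\|^2 + \|\nabla_T B\|^2)^2 - 4\langle \nabla_T A, \nabla_T B\rangle^2$, leaving an expression polynomial in the Riemann curvature tensor of $M^4$ and in the fibre second fundamental form. Both are continuous tensors on all of $M^4$, and compactness of $K$ then yields the uniform constant $A$.

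The main obstacle is to make the cancellation explicit. As the remark preceding the proposition indicates, the individual $\|\nabla_T J_\pm\|$ cannot be bounded on $K$ by this method --- it is genuinely the product that is tame. Hence a crude bound on each factor will not suffice; one must carefully match the $d\log\lambda$-terms appearing in $\nabla_T A$, in $\nabla_T B$, and in the inner product $\langle \nabla_T A, \nabla_T B\rangle$, and verify that the leading singular contributions from regions where $\lambda \to 0$ cancel identically in the combination above. Once this algebraic cancellation is pinned down, what remains is manifestly controlled by curvature of $M^4$ and by the fibre geometry, both of which extend smoothly across critical points.
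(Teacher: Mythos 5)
Your algebraic reduction is fine and in fact parallels the paper's: writing $J_\pm=A\pm B$ gives $\|\nabla_T J_+\|^2\|\nabla_T J_-\|^2=(\|\nabla_T A\|^2+\|\nabla_T B\|^2)^2-4\langle\nabla_T A,\nabla_T B\rangle^2$, which is the same sum/difference structure the paper obtains in coordinates, namely $\|\nabla_{e_1}J_\pm\|^2=4[(a\mp d)^2+(b\pm c)^2]$ and hence a product equal to $16[(a^2+b^2+c^2+d^2)^2-4(ad-bc)^2]$, where $a,b,c,d$ are the entries of the Weingarten map of the fibre. The problem is the step where you bound this quantity. Note that by Cauchy--Schwarz your expression is bounded only if \emph{both} $\|\nabla_T A\|^2-\|\nabla_T B\|^2$ stays bounded \emph{and} $\nabla_T A$, $\nabla_T B$ become nearly parallel where their norms blow up; this is a genuine constraint that has to come from somewhere, and you never produce the identity that supplies it. You appeal to an unspecified cancellation of the $d\log\lambda$-terms and then claim the remainder is controlled by the curvature of $M^4$ and by the fibre second fundamental form, ``both of which extend smoothly across critical points.''

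That last claim is false, and it is where the argument breaks. The second fundamental form of the regular fibres is defined only at regular points and is in general unbounded as one approaches a critical point (the quantities $a,b,c,d$ above are exactly its components, and the paper never bounds them individually); if it did extend continuously across the critical set, each factor $\|\nabla_T J_\pm\|$ would be bounded separately, which is precisely the stronger conclusion the authors state they were unable to obtain. The missing ingredient is the concrete curvature identity behind Wood's Prop.~3.2: in the Einstein case ${}^tA\circ A$ commutes with $J_\pm$, and following Wood's proof without the Einstein hypothesis yields, uniformly on the compact set $K$, the commutator bound $\|{}^tA\circ A(J_\pm X)-J_\pm({}^tA\circ A)X\|\leq C$ of (\ref{equation:AcircA}). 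In components this says exactly that $|ab+cd|$ and $|b^2+d^2-(a^2+c^2)|$ are bounded by curvature on $K$; setting $a=R_1\cos\theta$, $c=R_1\sin\theta$, $b=R_2\cos\alpha$, $d=R_2\sin\alpha$, the product becomes $16[(R_1^2-R_2^2)^2+4R_1^2R_2^2\cos^2(\theta-\alpha)]$, which these two bounds control even though $R_1,R_2$ may blow up. Without deriving such an identity (or an equivalent one), the cancellation you invoke is unsubstantiated and the proof does not close.
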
 

\subsection{Sketch of the paper}
In \S\ref{section:mainlemma}, we recall that the lowest order term of the Taylor development at a critical point of $F$ is a homogeneous holomorphic polynomial; we use it to control one of the two local pseudo-Hermitian structures for which $F$ is pseudo-holomorphic at regular points close to $m_0$. We express this in the Main Lemma (\S \ref{subsection:statementofMainLemma}) and Th. \ref{thm:main} 1) follows almost immediately (\S\ref{section:isolated}). In \S \ref{section:compact} we prove Th. \ref{thm:main} 2) using the twistor constructions of Eells and Salamon ([Ee-Sal]): the twistor space $Z(M^4)$ is a $2$-sphere bundle above $M^4$ endowed with an almost complex structure ${\mathcal J}$ and the regular fibres of $F$ lift to ${\mathcal J}$-holomorphic curves in $Z(M^4)$. The assumptions of Th. \ref{thm:main} 2) enable us to prove that these curves have bounded area so we can use Gromov's compacity theorem: as we approach $m_0$, the lifts of the regular fibres of $F$ in each of the two twistor spaces of $M^4$ converge to a ${\mathcal J}$-holomorphic curve. The Main Lemma enables us to pick one of the two orientations so that the limit curve has no vertical component near $m_0$: near $m_0$, it is the lift of the fibre of $F$ containing $m_0$. This is the key point in the proof of Th. \ref{thm:main} 2).\\
In \S\ref{section:majoration}, we prove Prop \ref{prop:majo} using an identity which Wood established to prove the  superminimality of the fibres  in the Einstein case.
\\ 
\\
For background and detailed information about harmonic morphisms, we refer the reader to  [B-W].
\section*{Acknowledgements} The authors thank Martin Svensson for helpful conversation, Marc Soret for useful comments on previous drafts, Paul Baird and John Wood for answering a crucial question at a crucial moment.\section{The main lemma}\label{section:mainlemma}
\subsection{The almost complex structure at regular points}
A REMARK ABOUT THE NOTATION. If $m$ is a point in $M^4$, we denote by $|m|$ the distance of $m$ to $m_0$. We introduce several constants, which we number $C_1$,...,$C_{10}$,...; they all have the same goal which is to say that one quantity or another is a ${\mathcal O}(|m|)$, so the reader in a hurry can ignore the indices and think of a single constant $C$.\\
\\
Let $m$ be a regular point of $F$ in $M^4$; as we mentioned above in Def. \ref{defn:horconf},  the tangent space of $M^4$ at a regular point $m$ of $F$ splits as follows:
\begin{equation}\label{eq:splitting}
T_mM^4=V_m\oplus H_m
\end{equation}
where the vertical space $V_m$ is the space tangent at $m$ to the fibre $F^{-1}(F(m))$ and the horizontal space $H_m$ is the orthogonal complement of $V_m$ in $T_mM^4$. 
\subsection{The symbol and its extension in a neighbourhood of a critical point}\label{symbolandJ0}
We use the notations of Th.\ref{thm:main} and we let $m_0$ be a critical point of $F$. We denote by $k$, $k>1$, the order of $F$ at $m_0$; namely, if $(x_i)$ is a coordinate system centered at $m_0$, $m_0$ being identified with $(0,...,0)$, we have\\
1) for every multi-index $I=\{i_1,...,i_4\}$ with $|I|\leq k-1$, 
$$\frac{\partial^{|I|}F}{\partial^{i_1}x_1... \partial^{i_4}x_4}(0,...,0)=0$$
2) there exists a multi-index $J=\{j_1,...,j_4\}$ with $|J|=k$ such that 
$$\frac{\partial^{k}F}{\partial^{j_1}x_1... \partial^{j_4}x_4}(0,...,0)\neq 0$$
The lowest order term of the Taylor development of $F$ at $m_0$ is a homogeneous polynomial 
$$P_0:T_{m_0}M^4\longrightarrow T_{F(m_0)}N^2$$ of degree $k$ called the {\it symbol} of $F$ at $m_0$. Fuglede showed ([Fu]) that $P_0$ is a harmonic morphism between $T_{m_0}M^4$ and $T_{F(m_0)}N^2$; it follows from [Wo] that $P$ is a  holomorphic polynomial of degree $k$ for some orthogonal complex structure $J_0$  on $T_{m_0}M^4$. \\
REMARK. The complex structure $J_0$ is not always uniquely defined as the following two examples illustrate:\\
1) $P_0(z_1,z_2)=z_1z_2$: $J_0$ is uniquely defined\\
2) $P_0(z_1,z_2)=z_1^2$: there are two possible $J_0$'s with opposite orientations.
\subsection{The main lemma}\label{subsection:statementofMainLemma}
We identify a neighbourhood $U$ of $m_0$ with a ball in $\mathbb{R}^4$, the point $m_0$ being identified with the origin and we let $(x_i)$ a system of normal coordinates in $U$. We pick these coordinates so that, at the point $m_0$, we have
\begin{equation}\label{eq:structure complexe}
J_0\frac{\partial}{\partial x_1}= \frac{\partial}{\partial x_2}\ \ \ \ 
J_0\frac{\partial}{\partial x_3}= \frac{\partial}{\partial x_4}
\end{equation}
We extend $J_0$ in $U$ by requiring (\ref{eq:structure complexe}) to be verified for all points in $U$.
\newconstant{B}
Of course $J_0$ does not necessarily preserve the metric outside of $m_0$, nevertheless there exists a constant 
$\useconstant{B}$ such that, for a vector $X$ tangent at a point $m$
\begin{equation}\label{eq:complexstructureestimates}
|<J_0X, J_0X>-<X, X>|\leq \useconstant{B}|m|^2\|X\|^2\ \ \ \ |<J_0X, X>|\leq \useconstant{B}|m|^2\|X\|^2
\end{equation}
We identify a neighbourhood of $F(m_0)$ with a disk in $\mathbb{C}$ centered at the origin, with $F(m_0)$ identified with $0$.
We also extend $P_0$ in $U$ by setting
$$P:U\longrightarrow\mathbb{C}$$
$$P(x_1,...,x_4)=P_0(x_1+ix_2, x_3+ix_4).$$
It is clear that for $m\in U$ and $i=1,...,4$
$$\frac{\partial P}{\partial x_i}(m)=\frac{\partial P_0}{\partial x_i}(x(m))$$
hence $P$ is $J_0$-holomorphic.
\newconstant{E}
\begin{mainlemma}
Let $M^4$ be a Riemannian $4$-manifold, $N^2$ a Riemannian $2$-surface and $F:M^4\longrightarrow N^2$ a harmonic morphism. We consider a critical point $m_0$ of $F$ which we do not assume isolated. We denote by $P_0$ the symbol of $F$ at $m_0$, assumed to be holomorphic for a parallel Hermitian complex structure $J_0$ on $T_{m_0}M^4$ and we extend $J_0$ to a neighbourhood of $m_0$ as explained above.\\
In a neighbourhood $U$ of $m_0$, there exists an almost Hermitian structure $J$ continuously defined on the regular points of $F$ in $U$ such that\\
1) $J$ has the same orientation as $J_0$\\
2) $F$ is pseudo-holomorphic w.r.t. $J$.\\
Moreover, for a point $m$ in $U$ 
$$|J(m)-J_0(m)|\leq \useconstant{E}|m|$$
for some positive constant $\useconstant{E}$ independent of $m$.
\end{mainlemma}
\begin{proof}
\newconstant{C}
\newconstant{D}
We let  $\Psi=F-P$. By definition of the symbol, there exist $\useconstant{C},\useconstant{D}$ such that 
\begin{equation}\label{eq:estimee pour psi}
\forall m\in U, \forall X\in T_mM |\Psi(m)|\leq \useconstant{C} |m|^{k+1},\ \ \ |d\Psi(m)(X)|\leq \useconstant{D} |m|^{k}\|X\|
\end{equation}
We let $(\epsilon_1, \epsilon_2)$ be a local positive orthonormal basis of $N^2$ in a neighbourhood of $u_0$. Denoting by $j$ the complex structure on $N$, we have
\begin{equation}\label{eq:jepsilon}
\epsilon_2=j\epsilon_1
\end{equation}
If $m$ is a regular point of $F$, we define two unit orthogonal vectors $e_1, e_2$ in $H_m$ such that
\begin{equation}\label{eq:base horizontale}
dF(e_1)=\lambda(m)\epsilon_1\ \ \ \ dF(e_2)=\lambda(m)\epsilon_2
\end{equation}
where $\lambda(m)$ denotes the dilation of $F$ at $m$ (see Def.\ref{defn:horconf} and [B-W] pp. 46-47).\\
Next we pick  an orthonormal basis $(e_3,e_4)$ of $H_m$ in a way that $(e_1,e_2,e_3,e_4)$ is of the orientation defined by $J_0$. We define the almost complex structure $J$ by setting
\begin{equation}\label{eq:definition de J}
Je_1=e_2\ \ \ \ Je_3=e_4
\end{equation}
We first show that $J_0e_1$ is close to $e_2$; we set
\begin{equation}\label{eq:Jzero3}
J_0e_1=ae_1+be_2+v
\end{equation}
where $a, b\in\mathbb{R}$ and $v\in V_m$.\\  
Since $a=<J_0e_1,e_1>$, we get from (\ref{eq:complexstructureestimates}) 
\begin{equation}\label{eq:estimeedea}
|a|\leq\useconstant{B}|m|^2
\end{equation} 
Next we compute $dF(J_0e_1)$:
$$dF(J_0e_1)=dP(J_0e_1)+d\Psi(J_0e_1)
=jdP(e_1)+d\Psi(J_0e_1)$$
\begin{equation}\label{eq:premieremoitie}
=jdF(e_1)-jd\Psi(e_1)+d\Psi(J_0e_1)
\end{equation}
On the other hand, it follows from (\ref{eq:Jzero3}) that
$$dF(J_0e_1)=adF(e_1)+bdF(e_2)$$
\begin{equation}\label{eq:deuxiememoitie}
=adF(e_1)+jbdF(e_1)
\end{equation}
by definition of $e_1$ and $e_2$ (see (\ref{eq:base horizontale})).\\
Putting (\ref{eq:premieremoitie}) and (\ref{eq:deuxiememoitie}) together, we get
$$j(1-b)dF(e_1)=adF(e_1)+jd\Psi(e_1)-d\Psi(J_0e_1)$$
and using (\ref{eq:base horizontale}), we derive
\begin{equation}\label{eq:jbfe}
|1-b|\lambda(m)=|adF(e_1)+jd\Psi(e_1)-d\Psi(J_0e_1)|
\end{equation}
We already know that the right-hand side of (\ref{eq:jbfe}) is a ${\mathcal O}(|m|^k)$; in order to show that $|1-b|$ is a ${\mathcal O}(|m|)$, we need to bound $\lambda(m)$ below.
\begin{lem}\label{lem:minorerlambda}
There exists a $\useconstant{F}>0$ such that, for $m$ small enough,
$$\lambda(m)\geq \useconstant{F}|m|^{k-1}$$
\end{lem}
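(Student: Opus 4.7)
The plan is to reduce the desired bound on $\lambda(m)$ to a lower bound on $|dP_0(m)|$, then exploit the homogeneity of $P_0$. Writing $F=P+\Psi$, horizontal weak conformality gives $\lambda(m)=|dF_m(e_1)|$ for any unit horizontal vector $e_1$, and the estimate $|d\Psi(m)|\leq \useconstant{D}|m|^k$ of (\ref{eq:estimee pour psi}) is of order $o(|m|^{k-1})$, so by the triangle inequality it suffices to establish
\[
|dP_0(m)|\;\geq\;2\useconstant{F}\,|m|^{k-1}
\]
at every regular point $m$ of $F$ sufficiently close to $m_0$; the $d\Psi$ contribution is then absorbed for $|m|$ small enough.

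By Wood's theorem $P_0$ is $J_0$-holomorphic, and identifying $(T_{m_0}M^4,J_0)$ with $\mathbb{C}^2$ we may factor $P_0=c\prod_{i=1}^r L_i^{m_i}$, with pairwise non-proportional complex linear forms $L_i$ and $\sum_i m_i=k$. The differential $dP_0$ is homogeneous of degree $k-1$ and vanishes exactly on the union of the complex lines $\{L_i=0\}$ for those indices $i$ with $m_i\geq 2$ (together with the origin). In the generic situation where all $m_i=1$, the differential $dP_0$ vanishes only at the origin; by homogeneity and compactness of the unit sphere $S^3$, the bound $|dP_0(m)|\geq 2\useconstant{F}|m|^{k-1}$ is immediate, and the lemma follows at once.

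The delicate case is when some $m_i\geq 2$, so the zero set of $dP_0$ contains one or more complex lines through the origin. Each such repeated factor produces a $2$-dimensional singular sheet $\Sigma_i\subset F^{-1}(F(m_0))$ through $m_0$ and tangent to $\{L_i=0\}$ at $m_0$. Since $F$ is constant on $\Sigma_i$ and horizontal weak conformality forces the rank of $dF$ to take only the values $0$ or $2$, every point of $\Sigma_i$ must itself be a critical point of $F$; regular points of $F$ therefore lie in the complement of $\bigcup_i\Sigma_i$. A blow-up argument, replacing $F$ by $F(|m|\,\cdot)/|m|^k$ (which converges in $C^1_{\mathrm{loc}}$ to $P_0$ as $|m|\to 0$), is used to translate this set-theoretic avoidance into the quantitative angular bound $|L_i(m)|\geq c\,|m|$ on the regular set; combined with homogeneity of $|dP_0|$ away from its critical lines, this yields $|dP_0(m)|\geq 2\useconstant{F}|m|^{k-1}$.

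The main obstacle is this last step, upgrading the non-containment $m\notin\Sigma_i$ into the quantitative angular bound $|L_i(m)|/|m|\geq c$. This requires a tangent-cone analysis of the critical set of $F$ at $m_0$, exploiting the Baird-Eells minimality of the regular fibres to understand how they degenerate onto the singular sheets $\Sigma_i$, together with the rigidity of the horizontal/vertical splitting in the rescaled limit. Once the angular bound is secured, the estimate on $|dP_0|$ and hence on $\lambda$ follows directly.
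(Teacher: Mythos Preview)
The paper's own argument is much more direct than yours: it observes that $\lambda(m)=\sup_{\|X\|=1}\|dF(m)X\|$ is the operator norm of $dF_m$, asserts a homogeneity bound $\sup_{\|X\|=1}\|dP(m)X\|\geq C|m|^{k-1}$, and concludes by the triangle inequality. No case distinction on the factorisation of $P_0$ is made.

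Your delicate-case argument, however, has a genuine gap. The claim that a repeated factor $L_i^{m_i}$ forces a two-dimensional sheet $\Sigma_i\subset F^{-1}(F(m_0))$ of critical points of $F$ is unjustified and in fact false. Take $F(z_1,z_2)=z_1^2+z_2^3$ on flat $\mathbb{C}^2$; this is holomorphic, hence a harmonic morphism, with symbol $P_0=z_1^2$ (so $k=2$) at the origin. The critical line of $P_0$ is $\{z_1=0\}$, yet every point $(0,z_2)$ with $z_2\neq 0$ is a \emph{regular} point of $F$, since $dF=3z_2^{2}\,dz_2$ there. Thus regular points of $F$ can sit exactly on the critical lines of $P_0$, and no angular bound $|L_i(m)|\geq c\,|m|$ is available on the regular set. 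In this example $\lambda(0,\epsilon)=3|\epsilon|^{2}$ while $|m|^{k-1}=|\epsilon|$, so $\lambda(m)/|m|^{k-1}\to 0$: the inequality you are trying to prove is actually violated. You have correctly located a real difficulty that the paper's one-line homogeneity assertion glosses over, but it cannot be repaired along the lines you propose, because the statement itself fails in the repeated-factor case.
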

\begin{proof}
First we notice  that
\begin{equation}\label{lambda}
\lambda(m)=\sup_{X\in T_mM^4, \|X\|=1} \|dF(m)X\|
\end{equation}
Indeed, take a vector $X\in T_mM$ with $\|X\|=1$. We split it into $X=X_v+X_h$ with $X_v$ vertical and $X_h$ horizontal. Then
$\|X_v\|^2+\|X_h\|^2=1$ and $$\|dF(m)X\|=\|dF(m)X_h\|=\lambda(m)\|X_h\|\leq \lambda(m).$$
\newconstant{F}
\newconstant{G}
 Since $P_0$ is of degree $k$, there exists $\useconstant{G}$ such that for $m$ small enough 
$$\sup_{X\in T_mM^4, \|X\|=1} \|dP(m)X\|\geq \useconstant{G}|m|^{k-1}.$$
It follows that, for $m\in U$ and $X\in T_mM^4$ with $\|X\|=1$, we have
$$|dF(m)X|=|dP(m)X+d\Psi(m)X|\geq |dP(m)X|-|d\Psi(m)X|$$
$$\geq \useconstant{G}|m|^{k-1}-\useconstant{D}|m|^k$$
We take $m$ small enough so that $\useconstant{D}|m|\leq \frac{\useconstant{G}}{2}$ and the lemma follows by  taking
$\useconstant{F}=\frac{\useconstant{G}}{2}$.
\end{proof}
\newconstant{H}
It follows from (\ref{eq:jbfe}) and from Lemma \ref{lem:minorerlambda} that
\newconstant{K}
\begin{equation}\label{eq:estimeedeb2}
|b-1|\leq \useconstant{K}|m|
\end{equation}
for $m$ small enough and some constant $\useconstant{K}$.
\\
To estimate $\|v\|$, we use (\ref{eq:complexstructureestimates}) to write for $m$ small enough
\begin{equation}\label{eq:normedeJe1}
|\|J_0e_1\|^2-1|=|a^2+b^2+\|v\|^2-1|\leq \useconstant{B}|m|^2
\end{equation}
Hence
$$\|v\|^2\leq \useconstant{B}|m|^2+a^2+|b^2-1|$$
\newconstant{L}
and it follows from (\ref{eq:estimeedea}) and (\ref{eq:estimeedeb2}) that 
\begin{equation}\label{estimeedev}
\|v\|\leq\useconstant{L}|m|
\end{equation}
for some positive constant $\useconstant{L}$.\\
We can now conclude. Since 
$$\|Je_1-J_0e_1\|=\|e_2-J_0e_1\|\leq |a|+|b-1|+\|v\|$$
 $\|Je_1-J_0e_1\|$ is a ${\mathcal O}(|m|)$; similarly for $\|Je_2-J_0e_2\|$. \\
\\
We now prove that $\|Je_3-J_0e_3\|$ is a ${\mathcal O}(|m|)$: there are no new ideas so we skip the details. We write
$$J_0e_3=\alpha e_1+\beta e_2+\gamma e_3+\delta e_4$$
Since $(e_i)$ is an orthonormal basis,
$$|\alpha|=| <J_0e_3,e_1>|\leq | <J_0e_1,e_3>|+\useconstant{B}|m|^2$$
using (\ref{eq:complexstructureestimates}); it follows from the estimates above for $J_0e_1
$ that $\alpha$ (and for the same reason $\beta$) is a ${\mathcal O}(|m|)$.\\
We also derive from (\ref{eq:complexstructureestimates}) that
$$|\gamma|=| <J_0e_3,e_3>|\leq  \useconstant{B}|m|^2$$ 
Now that we know that $\alpha, \beta$ and $\gamma$ are ${\mathcal O}(|m|^2)$'s, we focus on $\delta$ and derive from (\ref{eq:complexstructureestimates})
$$|\alpha^2+\beta^2+\gamma^2+\delta^2-1|=|\|J_0e_3\|^2-1|\leq\useconstant{B}|m|^2$$
It follows that $|\delta^2-1|$ is an ${\mathcal O}(|m|^2)$, hence $\delta$ is either close to $1$ or to $-1$:  let us prove that $\delta$ is positive, using orientation arguments.\\
In a neighbourhood of $m$, we identify $\Lambda^4(M)$ with $\mathbb{R}$ so we can talk of signs of $4$-vectors. If we denote by $\star$ the Hodge star operator, the sign of $e_1\wedge J_0e_1\wedge\star (e_1\wedge J_0e_1)$ gives us the orientation of $J_0$ hence, by our assumption, it is of the same sign 
as $e_1\wedge e_2\wedge e_3\wedge e_4$.
 We have seen that, close to $m$, $J_0e_1$ is close to $e_2$, hence $e_1\wedge J_0e_1
\wedge\star (e_1\wedge J_0e_1)$ and $e_1\wedge J_0e_1
\wedge e_3\wedge J_0e_3$ have the same sign; this latter $4$-vector has the same sign as
$\delta e_1\wedge e_2\wedge e_3\wedge e_4$. It follows that $\delta$ is positive.

Hence $\|J_0e_3-Je_3\|$ is a ${\mathcal O}(|m|)$ and so is $\|J_0e_4-Je_4\|$, by identical arguments; this concludes the proof of the Main Lemma.
\end{proof}
\section{Proof of Th.\ref{thm:main} 1): an isolated critical point}\label{section:isolated}
If $m_0$ is an isolated critical point, the almost complex structure $J$ given by the Main Lemma is defined in $U\backslash\{m_0\}$ where $U$ is a neighbourhood of $m_0$. At the point $m_0$, we put $J(m_0)=J_0$ and the Main Lemma tells us that the resulting almost complex structure is continuous.
\section{Proof of Th.\ref{thm:main} 2):$M$ is compact without boundary}\label{section:compact}
\subsection{Background: twistor spaces}
We give here a brief sketch of Eells-Salamon's work with twistors ([Ee-Sal]); the reader can find a more detailed exposition in Chap. 7 of [B-W].\\
The {\it twistor space} $Z^+(M^4)$ (resp. $Z^-(M^4)$) of  an oriented Riemannian $4$-manifold $(M^4,g)$ is the $2$-sphere bundle defined as follows: a point in $Z^+(M^4)$ (resp. $Z^-(M^4)$) is of the form $(J_0,m_0)$ where $m_0$ is a point in $M^4$ and $J_0$ is an orthogonal complex structure on $T_{m_0}M^4$ which preserves (resp. reverses) the orientation on $T_{m_0}M^4$. The twistor spaces $Z^{\pm}(M^4)$ admit the following almost complex structures ${\mathcal J}_{\pm}$. \\
We split the tangent space $T_{(J_0,m_0)}Z^{\pm}(M^4)$ into a horizontal space ${\mathcal H}_{(J_0,m_0)}$ and a vertical space ${\mathcal V}_{(J_0,m_0)}$. Since ${\mathcal H}_{(J_0,m_0)}$ is naturally identified with $T_{m_0}M^4$, we define ${\mathcal J}_{\pm}$ on ${\mathcal H}_{(J_0,m_0)}$ as the pull-back of $J_0$ from $T_{m_0}M^4$; the fibre above $m_0$ is an oriented $2$-sphere so we define ${\mathcal J}_{\pm}$ on ${\mathcal V}_{(J_0,m_0)}$ as the opposite of the canonical complex structure on this $2$-sphere. If $S$ is an oriented $2$-surface in $M^4$, it has a natural lift inside the twistor spaces: a point $p$ in $S$ lifts to the point $(J_p,p)$ in $Z^{+}(M^4)$ (resp. $Z^{-}(M^4)$), where $J_p$ is the orthogonal complex structure on $T_{p}M^4$ which preserves (resp. reverses) the orientation and for which the oriented plane $T_{p}S$ is an oriented complex line.\\
Jim Eells and Simon Salamon proved
\begin{thm}\label{thm:eellssalamon}([Ee-Sal])
Let $(M^4,g)$ be a Riemannian $4$-manifold. A minimal surface in $M^4$ lifts into a ${\mathcal J}_+$-holomorphic (resp. ${\mathcal J}_-$-holomorphic) curve in $Z^+(M^4)$ (resp. $Z^-(M^4)$). Conversely, every non vertical ${\mathcal J}_{\pm}$-holomorphic curve in $Z^{\pm}(M^4)$ is the lift of a minimal surface in $M^4$.  
\end{thm}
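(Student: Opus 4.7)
The plan is to compute the derivative of the lift map $\iota: S \to Z^{\pm}(M^4)$, $p \mapsto (J_p, p)$, and translate $\mathcal{J}_{\pm}$-holomorphicity into a differential-geometric condition on $S$. Viewing $p \mapsto J_p$ as a section of the endomorphism bundle over $S$ endowed with the connection induced by Levi-Civita, I would first show that for $X \in T_pS$ the derivative $d\iota(X) \in T_{(J_p,p)}Z^{\pm}$ has horizontal component $X$ under the identification $\mathcal{H}_{(J_p,p)} \simeq T_pM^4$ and vertical component $\nabla_X J$ under the identification of $\mathcal{V}_{(J_p,p)}$ with the space of skew endomorphisms of $T_pM^4$ anticommuting with $J_p$. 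A short computation gives that $\mathcal{J}_{\pm}$ acts on this vertical fiber as $A \mapsto -J_p \circ A$.

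With this in hand, $\mathcal{J}_{\pm}$-holomorphicity of $\iota$ decomposes into a horizontal and a vertical equation. The horizontal equation $J_p X = J_S X$ for $X \in T_pS$ holds automatically, since by construction $T_pS$ is an oriented complex line for $J_p$. The vertical equation reduces to the pointwise identity
\[
\nabla_{J_S X} J + J_p \circ \nabla_X J = 0 \quad \text{in } \mathrm{End}(T_pM^4),
\]
for every $X \in T_pS$, so the task becomes to show that this identity along $S$ is equivalent to the minimality of $S$.

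To prove this equivalence, I would pick at each $p \in S$ an orthonormal frame $(e_1, e_2, e_3, e_4)$ with $e_2 = J_p e_1$ spanning $T_pS$ and $e_4 = J_p e_3$ spanning $N_pS$. Expanding each $(\nabla_{e_i} J)(e_j)$ via $\nabla_{e_i}(J e_j) - J \nabla_{e_i} e_j$ and splitting into tangential and normal components along $S$, the tangential components of the displayed identity are satisfied automatically (they only record the intrinsic action of $J_S$ on $T_pS$), while the normal components produce a linear equation on the second fundamental form $h$ of $S$. The calculation should collapse to $h(e_1,e_1) + h(e_2,e_2) = 0$, i.e. the vanishing of the mean curvature vector. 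For the converse, non-verticality of a $\mathcal{J}_{\pm}$-holomorphic curve $\tilde{S}$ ensures that $\pi: \tilde{S} \to M^4$ is an immersion onto a surface $S$; the horizontal part of the $\mathcal{J}_{\pm}$-holomorphicity forces the $J$-component along $\tilde{S}$ to make $T_pS$ an oriented complex line, so $\tilde{S}$ is the canonical lift of $S$, and the vertical equation then yields minimality of $S$ by running the above computation backwards.

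The main obstacle is the adapted-frame calculation matching the normal part of $\nabla_{e_2} J + J_p \circ \nabla_{e_1} J = 0$ with $h(e_1,e_1) + h(e_2,e_2) = 0$: one must carefully track the sign convention making $\mathcal{J}_{\pm}$ the \emph{opposite} of the canonical complex structure on the $S^2$-fiber, and verify how $J_p$ interchanges $T_pS$ and $N_pS$ when acting on the normal components. The cases $\mathcal{J}_+$ and $\mathcal{J}_-$ are handled uniformly, the only difference being whether $(e_1, e_2, e_3, e_4)$ or $(e_1, e_2, e_4, e_3)$ represents the standard orientation of $M^4$; this affects the orientation on the $S^2$-fiber but leaves the final normal-bundle equation unchanged.
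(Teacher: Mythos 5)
This theorem is not proved in the paper at all: it is quoted verbatim from Eells--Salamon ([Ee-Sal]), so there is no internal proof to compare your sketch against. Judged on its own, your outline follows the standard twistorial argument (the one in [Ee-Sal], see also Chapter~7 of [B-W]) and its core is sound: the vertical part of the derivative of the Gauss lift is indeed $\nabla_X J$, it is automatically a skew endomorphism anticommuting with $J_p$ (the tangential and normal-normal components of $\nabla_X J$ vanish identically, and the normal-connection terms cancel), and in an adapted frame $e_2=J_pe_1\in T_pS$, $e_4=J_pe_3\in N_pS$ the vertical holomorphicity equation $\nabla_{e_2}J=-J_p\circ\nabla_{e_1}J$ reduces exactly to $h(e_1,e_1)+h(e_2,e_2)=0$, while the opposite vertical sign would instead produce the isotropy-type (superminimality) conditions. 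So your identification of the fibre sign convention as the crux is exactly right, and the forward direction (minimal $\Rightarrow$ lift is $\mathcal{J}_\pm$-holomorphic) goes through as you describe.

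The genuine weak point is the converse. Non-verticality of a $\mathcal{J}_\pm$-holomorphic curve $\tilde S$ does \emph{not} ensure that $\pi|_{\tilde S}$ is an immersion onto a surface: holomorphicity only forces $d\pi\circ d\varphi$ to be complex linear for the pointwise structure $J_p$, hence of rank $0$ or $2$, and the rank can drop to $0$ at isolated points (branch points); the image can also have singular points. So the projection is a weakly conformal harmonic map, i.e.\ a \emph{branched} minimal surface -- which is precisely why the paper's Corollary speaks of ``possibly branched'' minimal fibres. To close this direction you must either work throughout with weakly conformal harmonic maps (as Eells--Salamon do, phrasing the vertical equation in terms of the tension field and the $(1,0)$-part of the differential) or argue separately that the degeneracy set of the horizontal derivative of a non-vertical holomorphic curve is isolated and that minimality of the immersed part extends across it. Away from these points your argument that the horizontal equation forces $\tilde S$ to coincide with the canonical lift of its projection, after which the vertical computation runs backwards, is correct.
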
 
\subsection{Convergence of the twistor lifts of regular fibres}
We assume $M^4$ to be oriented: Th.\ref{thm:main} is local so if $M^4$ is not oriented, we endow a ball centered in at $m_0$ with an. We endow $M^4$ with the orientation given by the complex structure $J_0$ on $T_{m_0}M^4$ defined by the symbol (see \S\ref{symbolandJ0}). From now on we drop the superscript $+$ and we write $Z(M^4)$ for $Z^+(M^4)$. \\
We denote $u_0=F(m_0)$ and we let $(u_n)$ be a sequence of regular values of $F$ which converges to $u_0$. The preimages of the $u_n$'s are smooth compact closed $2$-submanifolds of $M^4$. For every positive integer $n$, we let  $$S_n=F^{-1}(u_n).$$
\begin{lem}\label{lem:boundedareaofSn}
The $S_n$'s all have the same area.
\end{lem}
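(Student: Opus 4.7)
My plan is to prove the stronger statement that the area function $u \mapsto A(u) := \mathrm{Area}(F^{-1}(u))$ is locally constant on the open set of regular values of $F$ in $N^2$. The key tool is the first variation formula for area together with the Baird-Eells characterization (Theorem \ref{theo:B-E}).

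For a regular value $u \in N^{2}$, $F^{-1}(u)$ is a smooth compact embedded surface in $M^4$, and the family $\{F^{-1}(u')\}_{u'}$ varies smoothly in a neighbourhood of $u$. Given $v \in T_u N^{2}$, the variation vector field of this family at a point $p \in F^{-1}(u)$ is the horizontal lift $\tilde v(p) \in H_p$ characterized by $dF_p(\tilde v) = v$; it is well-defined because $p$ is regular, and by the splitting (\ref{eq:splitting}) it is orthogonal to $T_p F^{-1}(u) = V_p$, hence purely normal to the fibre. The first variation formula then gives
$$d_u A(v) \;=\; -\int_{F^{-1}(u)} \langle \tilde v, H \rangle \, d\sigma,$$
where $H$ is the mean curvature vector field of $F^{-1}(u)$ in $M^{4}$. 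By Theorem \ref{theo:B-E}, the regular fibres of the harmonic morphism $F$ are minimal submanifolds, so $H \equiv 0$ and thus $d_u A \equiv 0$. Hence $A$ is locally constant on the set of regular values of $F$ in $N^{2}$.

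To conclude, since $u_n \to u_0$ the $u_n$'s are eventually contained in an arbitrarily small disk $D$ around $u_0$; by passing to a subsequence we may assume they all belong to a single connected component of the set of regular values in $D$, and local constancy of $A$ then yields $A(u_n) = A(u_m)$ for all $n,m$. The main (rather mild) obstacle is this last topological step: one must argue that the regular values remain connected near $u_{0}$ after removing the critical values. For a harmonic morphism into a surface, the critical values form a closed set of measure zero by Sard, and the rigid holomorphic structure of the symbol $P_{0}$ described in \S\ref{symbolandJ0} keeps this set sufficiently thin near $u_{0}$ (in the isolated-critical-point case it is even discrete); in any event, a simple subsequence extraction suffices for the intended use in the proof of Theorem \ref{thm:main}~2).
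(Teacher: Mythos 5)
The analytic core of your argument is exactly the paper's: the regular fibres are minimal by Theorem \ref{theo:B-E}, and the first variation of area along the (normal) horizontal variation field vanishes, so the area is locally constant in the regular value. Where you diverge is the final topological step, and that is where your write-up falls short of the statement. Sard's theorem only gives that the critical values have measure zero, and a closed set of measure zero can perfectly well disconnect a disk (e.g.\ a diameter), so ``measure zero'' does not yield the connectedness of the set of regular values that you need; your fallback of extracting a subsequence with equal areas proves strictly less than the lemma claims (\emph{all} the $S_n$ have the same area), even if a uniform bound along a subsequence would still feed into Gromov compactness. The paper closes this gap by invoking the fact that the singular \emph{values} of a harmonic morphism to a surface are discrete (the same fact is used again in the proof of Lemma \ref{lem:gammabounded}): removing a discrete set from the connected surface $N^2$ leaves a connected set, so any two regular fibres are deformations of one another through regular fibres and local constancy gives equality of all the areas at once. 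If you replace your appeal to Sard by this discreteness statement, your proof becomes the paper's proof.
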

\begin{proof}
 The singular values of $F$ are discrete so we can assume that the $S_n$'s are all deformation of one another; moreover they are all minimal. It follows from the formula for the first variation of area that if $(\Sigma_t)$, $t\in[0,1]$ is a family of minimal surfaces without boundary in a compact manifold, $\frac{d}{dt}area (\Sigma_t)=0$, hence $area (\Sigma_t)$ is constant, for $t\in [0,1]$.
 \end{proof}
We denote by $\tilde{S}_n$ the lift of $S_n$ into $Z^{+}(M^4)$: Th. \ref{thm:eellssalamon} tells us that they are ${\mathcal J}$-holomorphic curves.  Moreover we have
\begin{lem}\label{lem:boundedarea}
There exists a constant $C$ such that, for every positive integer $n$, $$area(\tilde{S}_n)\leq C.$$
\end{lem}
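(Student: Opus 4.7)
My plan is to bound the area of the twistor lift $\tilde{S}_n$ by the area of $S_n$ plus an integral along $S_n$ of the variation of the complex structure $J_p$, and then to control this variation via the second fundamental form of $S_n$ and the Gauss equation for minimal surfaces.

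Equip $Z^+(M^4)$ with the natural Sasaki-type metric: horizontal spaces (with respect to the Levi-Civita connection) are isometric to their projections to $T_m M^4$, each $S^2$-fibre carries the round metric, and the two distributions are mutually orthogonal. The differential of the lift $p \mapsto (p, J_p)$ applied to $X \in T_p S_n$ decomposes as the horizontal lift of $X$ plus the vertical vector $\nabla_X J$, so a pointwise area comparison yields
\begin{equation}
\text{area}(\tilde{S}_n) \leq \text{area}(S_n) + c_1 \int_{S_n} |\nabla J|^2 \, dA
\end{equation}
for some universal constant $c_1$. Since $J_p$ is uniquely determined by the oriented plane $T_p S_n$, the assignment $p \mapsto J_p$ factors through the Gauss map $p \mapsto T_p S_n$, whose derivative is (up to a universal constant) the second fundamental form $A_n$ of $S_n$; hence $|\nabla J|^2 \leq c_2 |A_n|^2$ pointwise along $S_n$.

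For the minimal surface $S_n$, the Gauss equation gives $K_{S_n} = K_M(T_p S_n) - \tfrac{1}{2}|A_n|^2$, and combined with Gauss-Bonnet this yields
\begin{equation}
\int_{S_n} |A_n|^2 \, dA = 2\int_{S_n} K_M(T_p S_n) \, dA - 4\pi \chi(S_n).
\end{equation}
The right-hand side is bounded uniformly in $n$: $|K_M|$ is bounded because $M^4$ is compact, $\text{area}(S_n)$ is bounded by Lemma \ref{lem:boundedareaofSn}, and $\chi(S_n)$ stabilises for $n$ large by Ehresmann's theorem applied to the proper submersion $F \colon F^{-1}(D \setminus \{u_0\}) \to D \setminus \{u_0\}$, where $D$ is a small disk around $u_0$ containing no other singular value of $F$ (the discreteness of singular values is already invoked in the proof of Lemma \ref{lem:boundedareaofSn}). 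Handling the finitely many small-$n$ terms separately, we obtain a uniform bound $\text{area}(\tilde{S}_n) \leq C$.

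The main technical obstacle is establishing the pointwise estimate $|\nabla J|^2 \leq c_2 |A_n|^2$: it requires identifying the vertical component of the twistor lift's differential with the differential of the Gauss map, using the explicit correspondence between orientation-preserving orthogonal almost complex structures on $T_p M^4$ and oriented $2$-planes in $T_p M^4$. Once this pointwise identification is in hand, the rest of the argument reduces to bounded geometry on a compact manifold and the topological stability of regular fibres over a small punctured disk.
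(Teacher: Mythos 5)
Your proposal is correct, but it replaces the paper's key step by a different (more elementary) argument, so it is worth comparing the two. The paper, like you, reduces the problem to bounding the vertical energy $\int_{S_n}\|\nabla\gamma_n\|^2$ of the twistor lift (it keeps the cross term via Cauchy--Schwarz where you absorb it into a constant), but it then bounds this energy by quoting the Chen--Tian identity $\frac12\int_{S_n}\|\nabla\gamma_n\|^2=-\chi(S_n)-c_1(NS_n)+\int_{S_n}\Omega^T\,dA+\int_{S_n}\Omega^N\,dA$ (its Lemma \ref{lem:gammabounded}); this forces it to control the normal bundle degree as well, which it does by observing that the embedded fibres are all homologous, so $c_1(NS_n)=[S_n]\cdot[S_n]$ is independent of $n$. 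You instead bound the vertical derivative pointwise by the second fundamental form and control $\int_{S_n}|A_n|^2$ via the Gauss equation for minimal surfaces plus Gauss--Bonnet, which needs only the ambient curvature bound, the area bound of Lemma \ref{lem:boundedareaofSn}, and the stabilisation of $\chi(S_n)$ (your Ehresmann argument over a small punctured disk is the same point as the paper's remark that the critical values are isolated, so the regular fibres have the same homotopy type); the normal bundle never enters. The step you single out as the technical obstacle is genuine but unproblematic: taking an adapted frame with $e_1,e_2$ tangent, $e_3,e_4$ normal, $Je_1=e_2$, $Je_3=e_4$, the connection terms $\langle\nabla_Xe_1,e_2\rangle$ and $\langle\nabla_Xe_3,e_4\rangle$ cancel in $(\nabla_XJ)$, leaving only combinations of second fundamental form components, so $\|\nabla_XJ\|\le c\,|A_n|$ holds pointwise. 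The two routes are of course cousins: integrating your Gauss-equation estimate (together with the Ricci equation for the normal bundle) is essentially how the Chen--Tian identity is derived, so the paper's version is sharper (an equality, and it also packages the normal contribution), while yours is self-contained, avoids citing [C-T], and suffices for the uniform bound claimed in the lemma.
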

\begin{proof}
We parametrize the $\tilde{S}_n$'s by maps
$$\gamma_n:S_n\longrightarrow\tilde{S}_n$$
We let $(e_1,e_2)$ be an orthonormal basis of the tangent bundle $TS_n$ and we denote by $\tilde{e}_1,\tilde{e}_2$ their lift in $Z(M^4)$. 
For $i=1,2$, we split  $\tilde{e}_i$ into vertical and horizontal components,
$$\tilde{e}_i=\tilde{e}_i^h+\tilde{e}_i^v$$
We write the area element of $\tilde{S}_n$:
\begin{equation}\label{eq:areaelement}
\|\tilde{e}_1\wedge \tilde{e}_2\|\leq \|\tilde{e}_1^h\wedge \tilde{e}_2^h\|+\|\tilde{e}_1^h\wedge \tilde{e}_2^v\|+\|\tilde{e}_1^v\wedge \tilde{e}_2^h\|+\|\tilde{e}_1^v\wedge \tilde{e}_2^v\|
\end{equation}
Integrating (\ref{eq:areaelement}) and using Cauchy-Schwarz inequality, we get
$$area(\tilde{S}_n)\leq area(S_n)+2\sqrt{area(S_n)}\sqrt{\int_{S_n}\|\nabla\gamma_n\|^2}+\int_{S_n}\|\nabla\gamma_n\|^2$$
where $\nabla$ denote the connection on $Z(M^4)$ induced by the Levi-Civita connection on $M^4$.
\begin{lem}\label{lem:gammabounded}
There exists a constant $A$ such that, for every positive $n$
$$\int_{S_n}\|\nabla \gamma_n\|^2\leq A$$
\end{lem}
\begin{proof} We need to introduce a few notations to give a formula for the integral in Lemma \ref{lem:gammabounded}.
For every $n$, we let $NS_n$ be the normal bundle of $S_n$ in $M^4$ and we endow it with a local orthonormal basis $(e_3,e_4)$. We denote by $R$ the curvature tensor of $(M^4,g)$ and we put
$$\Omega^T=<R(e_1,e_2)e_1,e_2>\ \ \ \ \ \ \ \ \ \Omega^N=<R(e_1,e_2)e_3,e_4>$$
Finally we let  $c_1(NS_n)$ be the degree of $NS_n$ i.e. the integral of its $1$st Chern class; it changes sign with the orientation of $M^4$. Note that other authors (e.g. [C-T]) denote it by  $\chi(NS_n)$, by analogy with the Euler characteristic.\\   
We denote by $dA$ the area element of $S_n$ and we derive from [C-T] (see also [Vi 2])
\begin{equation}\label{nablaJplus} 
\frac{1}{2}\int_{S_n}\|\nabla\gamma_n\|^2=-\chi(S_n)-c_1(NS_n)+\int_{S_n}\Omega^TdA+\int_{S_n}\Omega^NdA
\end{equation}
The critical values of $F$ are isolated, hence the regular fibres all have the same homotopy type and the same homology class $[S_n]$ in 
$H_2(M^4,\mathbb{Z})$. In particular, $|\chi(S_n)|$ does not depend on $n$. The $S_n$'s are embedded hence $c_1(NS_n)$ is equal to the self-intersection number $[S_n].[S_n]$ which does not depend on $n$ either.\\
Since $M^4$ is compact, the expression $|<R(u,v)w,t>|$ has an upper bound for all the $4$-uples of unit vectors 
$(u,v,w,t)$. It follows that the integrals in $\Omega^T$ and $\Omega^N$ in (\ref{nablaJplus}) have a common bound in absolute value.\\
In conclusion, all the terms in the RHS of (\ref{nablaJplus}) are bounded in absolute value uniformely in $n$.
\end{proof}
 Lemma \ref{lem:boundedarea} follows immediately.\\
\end{proof}
Thus the $\tilde{S}_n$'s are ${\mathcal J}$-holomorphic curves of bounded area in $Z(M^4)$: Gromov's result ([Gro]) ensures that they admit a subsequence which converge in the sense of cusp-curves to a ${\mathcal J}$-holomorphic curve $C$. 
\begin{lem}\label{lem:piC} We denote by $\pi:Z(M^4)\longrightarrow M^4$ the natural projection. Then
$$\pi(C)=F^{-1}(u_0).$$
\end{lem}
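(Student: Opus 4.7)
The plan is to prove the equality by establishing the two inclusions $\pi(C)\subset F^{-1}(u_0)$ and $F^{-1}(u_0)\subset \pi(C)$ separately. Both rely on continuity of $F$, compactness of $Z(M^4)$ (which holds because $M^4$ is compact and $Z(M^4)$ is a $2$-sphere bundle over it), and the fact that Gromov's cusp-curve compactness gives Hausdorff convergence of the images $\tilde{S}_n$ to the image of $C$. The one external input I would invoke is the openness of harmonic morphisms, a classical result of Fuglede which follows at once from the description of the symbol in \S\ref{symbolandJ0}; it is needed only for the harder direction.

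For the inclusion $\pi(C)\subset F^{-1}(u_0)$, I would pick any $z$ in the image of $C$ and use Gromov convergence to produce a sequence $z_n\in\tilde{S}_n$ with $z_n\to z$. Since $\pi(z_n)\in S_n=F^{-1}(u_n)$ we get $F(\pi(z_n))=u_n\to u_0$, and passing to the limit by continuity of $F\circ\pi$ yields $F(\pi(z))=u_0$.

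For the reverse inclusion $F^{-1}(u_0)\subset\pi(C)$, I would fix $m\in F^{-1}(u_0)$ and, for each $k$, consider the ball $B_{1/k}(m)$. Openness of $F$ guarantees that $F(B_{1/k}(m))$ is a neighbourhood of $u_0$, so for $n$ large enough $u_n$ lies in this set; a diagonal choice produces $m_n\in S_n$ with $m_n\to m$. Lifting to $\tilde{m}_n\in\tilde{S}_n\subset Z(M^4)$ and extracting a convergent subsequence (using compactness of $Z(M^4)$), one obtains $\tilde{m}_n\to\tilde{m}$ with $\pi(\tilde{m})=m$. Hausdorff convergence of the images built into Gromov's theorem places $\tilde{m}$ in the image of $C$, finishing the inclusion.

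The only real subtlety is invoking the correct feature of Gromov convergence: namely that every accumulation point of sequences $\tilde{m}_n\in\tilde{S}_n$ in a compact ambient space lies in the image of the limit cusp-curve. This is exactly the Hausdorff-convergence component of the cusp-curve compactness theorem, so it is available with no further argument. Everything else reduces to continuity and the openness of the harmonic morphism $F$.
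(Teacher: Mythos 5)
Your proof is correct and follows essentially the same route as the paper: the easy inclusion by continuity of $F\circ\pi$ along approximating sequences, and the reverse inclusion by using Fuglede's openness of harmonic morphisms to produce points $m_n\in S_n$ converging to a given $m\in F^{-1}(u_0)$, then lifting to the twistor space, extracting a convergent subsequence by compactness, and concluding that the limit lies on $C$ by Gromov convergence. The only cosmetic difference is that you argue the existence of the sequence $m_n$ directly from openness, whereas the paper phrases it as a contradiction; the substance is identical.
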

\begin{proof}
The map $\pi\circ F$ is continuous, so it is clear that $\pi(C)\subset F^{-1}(u_0)$.\\
To prove the reverse inclusion, we take a point $p\in F^{-1}(u_0)$ and we claim
\begin{claim}\label{claim:sequence}
There exists a subsequence $(u_{s(n)})$ of $(u_n)$ and a sequence of points $(p_n)$ of $M^4$ converging to $p$ with $$F(p_n)=u_{s(n)}$$ for every positive integer $n$.
\end{claim}
Indeed, if Claim \ref{claim:sequence} was not true, we would have the following
\begin{claim}\label{claim:false}
$\exists\epsilon>0$ such that $\forall n\in \mathbb{N}^*$ and $\forall m\in M^4$ with $F(m)=u_n$, we have 
$$d(m,p)>\epsilon.$$
\end{claim} 
If Claim \ref{claim:false} was true, the set $F(B(x,\epsilon))$ would contain $u_0$ but would not be a neighbourhood of $u_0$, a contradiction of the fact that a harmonic morphism is open ([Fu],[B-W] p.112).\\
So Claim \ref{claim:sequence} is true: if we denote by $(J_n,p_n)$ the pullback of the $p_n$'s in the twistor lifts $\tilde{S}_n$, they admit a subsequence which converges to a point $(\hat{J},p)$, for some $\hat{J}$ in the twistor fibre above $p$. Clearly $(\hat{J},p)$ belongs to $C$, hence $p$ belongs to $\pi(C)$ and Lemma \ref{lem:piC} is proved.
\end{proof}
Their are a finite number of points $p_1,...,p_k\in M^4$ and positive integers $q_1,...,q_k$ such that the curve $C$ 
can be written
\begin{equation}\label{eq:courbeC}
C=\Gamma+\sum_{i=1}^k q_iZ_{p_i}
\end{equation}
where $\Gamma$ is a ${\mathcal J}$-holomorphic curve with no vertical components and the $Z_{p_i}$'s are the twistor fibres above the $p_i$'s.
It follows from Lemma \ref{lem:piC} that 
$$\pi(\Gamma)=F^{-1}(u_0).$$
We derive that $F^{-1}(u_0)$ is a minimal surface possibly with branched points and having $\Gamma$ as its twistor lift.\\
Note that the presence of twistor fibres in (\ref{eq:courbeC}) is to be expected: when a sequence of smooth minimal surfaces converges to a minimal surface with singularities, its twistor lifts can experience bubbling off of twistor fibres above singular points (see [Vi2] for a more detailed discussion of this phenomenon). However, in the present case, the Main Lemma excludes such bubbling-off in a neighbourhood of $m_0$:
\begin{lem}\label{lem:nobubbles}
There exists an $\epsilon>0$ such that, if $q_i$ is one of the points appearing in (\ref{eq:courbeC}), 
$$dist(m_0,p_i)>\epsilon$$
\end{lem}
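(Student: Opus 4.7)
The plan is to use the Main Lemma to pin down the twistor lift $\tilde{S}_n$ near $m_0$, and then derive a contradiction from the existence of a bubble $Z_{p_i}$ sitting above a point $p_i$ too close to $m_0$.

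First I would observe that for a regular point $m$ of $F$ lying in the neighbourhood $U$ of $m_0$ given by the Main Lemma, the twistor lift of the fibre $F^{-1}(F(m))$ at $m$ is exactly $(J(m),m)\in Z^+(M^4)$, where $J$ is the almost Hermitian structure supplied by the Main Lemma. Indeed, $J$ preserves the orientation of $M^4$ (same orientation as $J_0$), it preserves the vertical space $V_m$ (because $F$ is $J$-pseudo-holomorphic), and the orientation it induces on $V_m$ is the one compatible with the orientation pulled back from $N^2$ by $F$. Hence $\tilde{S}_n\cap\pi^{-1}(U)=\{(J(m),m):m\in S_n\cap U\}$, and by the estimate of the Main Lemma this set lies in the tubular neighbourhood
\[
T_\eta=\{(J',m)\in Z^+(M^4): m\in U,\ |J'-J_0(m)|\leq \useconstant{E}|m|\}
\]
of the section $m\mapsto(J_0(m),m)$, with thickness tending to zero as $m\to m_0$.

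Next I would argue by contradiction. Suppose there is no $\epsilon>0$ as claimed; since the set $\{p_1,\dots,p_k\}$ is finite, this means some $p_{i_0}$ equals $m_0$. Then the twistor fibre $Z_{m_0}=\pi^{-1}(m_0)$ appears as a component of $C$ with multiplicity $q_{i_0}\geq 1$. Pick any $\hat{J}\in Z_{m_0}$ with $\hat{J}\neq J_0(m_0)$; such a $\hat{J}$ exists because $Z_{m_0}$ is a whole 2-sphere of orthogonal almost complex structures on $T_{m_0}M^4$ in the fixed orientation class. By the definition of cusp-curve convergence (bubbling is detected by rescaling sequences of points in the $\tilde{S}_n$ that tend to $m_0$ in $M^4$ while covering the bubble), there exists a sequence of points $(J_n,m_n)\in\tilde{S}_n$ with $(J_n,m_n)\to(\hat{J},m_0)$ in $Z^+(M^4)$. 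In particular $m_n\to m_0$ in $M^4$, so $|m_n|\to 0$.

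Now the contradiction comes from the Main Lemma applied along this sequence: $J_n=J(m_n)$ satisfies
\[
|J_n-J_0(m_n)|\leq \useconstant{E}|m_n|\longrightarrow 0,
\]
while continuity of $J_0$ gives $J_0(m_n)\to J_0(m_0)$. Hence $J_n\to J_0(m_0)$, which forces $\hat{J}=J_0(m_0)$, contradicting the choice of $\hat{J}$. So no $p_i$ coincides with $m_0$, and taking $\epsilon=\min_i\mathrm{dist}(m_0,p_i)>0$ finishes the proof.

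The only delicate point is the convergence statement from Gromov's compactness theorem: one must know that if $Z_{m_0}$ appears as a bubble in the limit $C$ with positive multiplicity, then for every point $(\hat{J},m_0)\in Z_{m_0}$ there really is a sequence $(J_n,m_n)\in\tilde{S}_n$ converging to it. This is a standard feature of cusp-curve convergence (points in bubble components are always realized as limits of points on the approximating curves, after the implicit rescaling that produces the bubble), and it is the mechanism through which the Main Lemma shuts off the bubbling at $m_0$.
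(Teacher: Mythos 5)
Your proof is correct and follows essentially the same route as the paper: use the Main Lemma estimate to confine the twistor lifts of the regular fibres over points near $m_0$ to a shrinking neighbourhood of $J_0$, invoke the fact that a bubble component of the Gromov limit must lie in the closure of the approximating curves to rule out the fibre $Z_{m_0}$, and finish by finiteness of the $p_i$'s. The only difference is cosmetic: the paper keeps the sign ambiguity $\gamma(m)=\pm J(m)$ (the lift depends on how the fibres are oriented) and removes it by a connectedness argument, whereas you fix the fibre orientation by pulling back that of $N^2$, which pins the sign to $+$; either variant works, since the lifts then accumulate only at $\pm J_0$ and never cover the whole twistor fibre.
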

\begin{proof}
Since the $q_i$'s are finite in number, it is enough to prove that $m_0$ is not one of them.\\
The almost complex structure $J_0$ appearing in the Main Lemma does not necessarily preserve the metric outside of $m_0$; so we introduce  the bundle ${\mathcal C}$ of {\it all} the complex structures on $TM^4$ which preserve the orientation. It contains the bundle $Z(M^4)$ and embeds into the bundle $GL(TM^4)$. We denote by $d_{\mathcal C}$ the distance on ${\mathcal C}$ induced by the metric on $GL(TM^4)$ and by $d_{M^4}$ the distance in $M^4$.
\begin{lem}\label{lem:Jmpetit}
$\forall\epsilon>0\ \ \ \exists\eta>0\ \ \ $ such that 
$$d_{M^4}(m,m_0)<\eta\Rightarrow d_{\mathcal C}[(J(m),m),(J_0,m_0)] <\epsilon$$
\end{lem}
\begin{proof} $d_{\mathcal C}[(J(m),m),(J_0,m_0)]$
\begin{equation}\label{eq:majo}
\leq d_{\mathcal C}[(J(m),m),(J_0(m),m)]+d_{\mathcal C}[(J_0(m),m),(J_0,m_0)]
\end{equation}
We bound the first term in (\ref{eq:majo}) using the Main Lemma; the second term is bounded because  $J_0:U\longrightarrow {\mathcal C}$ is continuous.
\end{proof}
If $m$ is a regular point of $F$, we denote by $\gamma(m)$ the point above $m$ in the twistor lift of $F^{-1}(F(m))$; in the Main Lemma, we defined the almost complex structure $J(m)$. The tangent plane to the fibre at $m$ is a complex line for both $\gamma(m)$ and $J(m)$; since $\gamma(m)$ and $J(m)$ both preserve the orientation, it follows that $\gamma(m)=\pm J(m)$. We can get rid of the $\pm$ by saying that $F^{-1}(u_0)$ is a $2$-dimensional CW-complex, hence $B(m_0,\epsilon)\backslash F^{-1}(u_0)$ is connected: there is a $s\in\{-1,+1\}$ such that for every regular point $m$ of $F$ near $m_0$, 
\begin{equation}\label{eq:sHJ} 
\gamma(m)=sJ(m)
\end{equation}
We rewrite Lemma \ref{lem:Jmpetit}:
$\forall\epsilon>0\ \ \ \exists\eta>0\ \ \ $ such that for a regular point $m$, 
\begin{equation}\label{eq:estimeeH}
d_M(m,m_0)<\eta\Rightarrow d_{\mathcal C}[(\gamma(m),m),(sJ_0,m_0)] <\epsilon
\end{equation}
If the whole twistor fibre $Z_{m_0}M^4$ was included in $C$, it would be in the closure of the union of the twistor lifts of the regular fibres of $F$ in a neighbourhood of $m_0$: we see from (\ref{eq:estimeeH}) that this is impossible. This concludes the proof of Lemma \ref{lem:nobubbles}.
\end{proof}
\subsection{Construction of the almost complex structure}
We now construct a local section of $Z(M^4)$, for which $F$ holomorphic. As in [Vi1], we work first on the space $\mathbb{P}(Z(M^4))$ obtained by taking the quotient of each twistor fibre by its antipody; if $J$ is an element of $Z(M^4)$, we denote by $\bar{J}$ its image in $\mathbb{P}(Z(M^4))$.\\
If $m$ is a regular point of $F$, there are $2$ complex structures, $J_1$ and $J_2$, on $T_{m_0}M^4$ for which the unoriented planes $V_{m}$ and $H_{m}$ are complex lines. These two complex structures verify $J_1=-J_2$, hence they define the same point, denoted $\bar{J}(m)$, in $\mathbb{P}(Z_m(M^4))$. To extend this section of $\mathbb{P}(Z(M^4))$ above $F^{-1}(u_0)$, we state
\begin{lem}\label{lem:antipody}
There exists an $\epsilon>0$ such that every $m\in B(m_0,\epsilon)\cap F^{-1}(u_0)$ has either a single preimage in $\Gamma$ or exactly two antipodal preimages in $\Gamma$.
\end{lem}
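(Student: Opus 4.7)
The plan is to analyze the fibre structure of the projection $\pi|_\Gamma$ near $m_0$ by combining the $\mathcal{J}$-holomorphicity of $\Gamma$ with the Main Lemma's quantitative control on admissible complex structures.

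First I would shrink $\epsilon$ so that $B(m_0,\epsilon)$ avoids all of the bubble points $p_i$ appearing in (\ref{eq:courbeC}), which is possible by Lemma \ref{lem:nobubbles}. Then $\Gamma\cap\pi^{-1}(B(m_0,\epsilon))$ coincides with the full cusp-curve locally, is $\mathcal{J}$-holomorphic with no vertical components, and projects onto $B(m_0,\epsilon)\cap F^{-1}(u_0)$ by Lemma \ref{lem:piC}. I would then shrink $\epsilon$ a second time so that, by (\ref{eq:estimeeH}), every preimage $(J,m)\in\Gamma$ with $m\in B(m_0,\epsilon)$ has $J$ lying in an arbitrarily small geodesic cap of $Z_m^+(M^4)\cong S^2$ centred at $sJ_0(m_0)$, where $s\in\{-1,+1\}$ is the constant sign from (\ref{eq:sHJ}).

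The fibre count then proceeds as follows. For any $(J,m)\in\Gamma$, the $\mathcal{J}$-invariance of $T_{(J,m)}\Gamma$ together with the absence of vertical components forces the projection of $T_{(J,m)}\Gamma$ to $T_mM^4$ to be a $J$-complex line tangent to a local branch of $F^{-1}(u_0)$ at $m$. At a smooth point of $F^{-1}(u_0)$ this tangent plane is unique, and exactly two elements of $Z_m^+(M^4)$—an antipodal pair—make it a complex line; hence such a point $m$ has at most two preimages, necessarily antipodal if there are two. This yields the desired conclusion at the smooth points.

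The remaining task, and the main obstacle, is to rule out at singular points of $F^{-1}(u_0)$ inside $B(m_0,\epsilon)$—either $m_0$ itself or possible crossings of local branches close to $m_0$—configurations having three or more preimages, or two preimages which are not antipodal. I would combine (i) the Micallef--White/McDuff structure theorem, which makes the singularities of the $\mathcal{J}$-holomorphic curve $\Gamma$ isolated with locally pseudo-holomorphic branches having well defined tangent planes, with (ii) the second shrinking of $\epsilon$ above: two non-antipodal preimages of a common $m$ would correspond to two distinct $2$-planes made into $J$-complex lines by $J$'s confined to the same small cap around $sJ_0(m_0)$, which after further shrinking $\epsilon$ forces the two $2$-planes—and therefore the two preimages—to coincide, leaving only a single preimage at such a singular point.
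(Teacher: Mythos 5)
Your reduction to the fibres of $\pi|_{\Gamma}$ and your treatment of the smooth points of $F^{-1}(u_0)$ are fine and agree with the paper: after shrinking $\epsilon$ via Lemma \ref{lem:nobubbles}, each local branch of $\Gamma$ is non-vertical, hence is the twistor lift of a local branch of $F^{-1}(u_0)$, and at a point with a single well-defined tangent plane exactly one antipodal pair of elements of $Z^+_m(M^4)$ makes that plane a complex line.

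The gap is in your singular-point case. The step ``two non-antipodal preimages correspond to two distinct $2$-planes made into complex lines by $J$'s confined to a small cap around $sJ_0$, which after shrinking $\epsilon$ forces the two planes, and therefore the two preimages, to coincide'' is a non sequitur: a single orthogonal complex structure already admits a whole $\mathbb{CP}^1$ of complex lines (the $z_1$-axis and the $z_2$-axis of $\mathbb{C}^2$ are distinct complex lines for the very same $J$), so confining $J_1$ and $J_2$ to an arbitrarily small cap around $sJ_0$ puts no constraint on the mutual position of the two tangent planes, and it does not force $J_1=J_2$ either, since the cap is not a point. Consequently your conclusion that a singular point has a single preimage does not follow; nor is it what is needed, since the lemma (and the paper's proof) allows two antipodal preimages over $m_0$. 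The estimate (\ref{eq:estimeeH}) is exactly what rules out a vertical bubble at $m_0$ in Lemma \ref{lem:nobubbles}, but it cannot by itself pin down the finitely many preimages. The paper's proof does two things you omit: (a) it uses that the singular points of the closed branched minimal surface $F^{-1}(u_0)$ are discrete to shrink $\epsilon$ so that the only possible singular point of $F^{-1}(u_0)$ in $B(m_0,\epsilon)$ is $m_0$ itself, which disposes of your ``crossings of local branches close to $m_0$''; and (b) at $m_0$ it invokes the $J_0$-holomorphicity of the symbol, which makes every plane tangent to $F^{-1}(u_0)$ at $m_0$ an exact $J_0$-complex line, so that any preimage of $m_0$ in $\Gamma$ is $(\pm J_0,m_0)$, i.e.\ there are at most two and they are antipodal. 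Without some such exact (not merely approximate) alignment of the tangent planes at $m_0$ with a single complex structure, the possibility of two distinct non-antipodal preimages over $m_0$ remains open in your argument.
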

\begin{proof} 
We let $\epsilon$ be a number satisfying Lemma \ref{lem:nobubbles} and we pick $m\in B(m_0,\epsilon)\cap F^{-1}(u_0)$. Since $\Gamma$ has no vertical component above $B(m,\epsilon)\cap F^{-1}(u_0)$, it meets $Z_{m}(M^4)$  at a discrete number of points. Let us assume that $J_1$ and $J_2$ are two different elements of $\Gamma\cap Z_{m}(M^4)$. There exist two non vertical possibly branched disks $\Delta_1$ and $\Delta_2$ in $\Gamma$ containing $(J_1,m)$ and $(J_2,m)$ respectively. Each one of the two $\Delta_i$'s is the twistor lift of a possibly branched disk $D_i$ of $F^{-1}(u_0)$. The disks $D_1$ and $D_2$ meet at $m$: if they have different tangent planes at $m$, this implies that $m$ is a singular point of $F^{-1}(u_0)$. Since  $F^{-1}(u_0)$ is a closed minimal surface, its singular points are discrete so we can make $\epsilon$ small enough so that there is not singular point in $F^{-1}(u_0)\cap B(m_0,\epsilon)$ except for possibly $m_0$.\\
So we assume that $m_0$ is a singular point of $F^{-1}(u_0)$. Because the symbol is $J_0$ holomorphic, all planes tangent to $m_0$ at $F^{-1}(u_0)$ are $J_0$-complex lines and it follows that $J_1=-J_2=\pm J_0$.
\end{proof} 
We denote by $\bar{\Gamma}$ the projection of $\Gamma$ in $\mathbb{P}(Z(M^4))$ and  
 by $\bar{J}$ the local section of $\bar{\Gamma}$ given by Lemma \ref{lem:antipody}.
\begin{lem}\label{lem:continuousinPZ}
There exists a small $\epsilon>0$ such that the map
$$B(m_0,\epsilon)\longrightarrow \mathbb{P}(Z(M^4))$$
$$m\mapsto \bar{J}(m)$$
is continuous.
\end{lem}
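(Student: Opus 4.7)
The plan is to first shrink $\epsilon$ so that every point of $B(m_0,\epsilon)\setminus\{m_0\}$ is a regular point of $F$, then handle continuity on this open set (where $\bar J$ is governed by the smoothly-varying vertical/horizontal splitting) separately from continuity at $m_0$ itself (where the Main Lemma controls the limit).

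First I would combine two shrinkings of $\epsilon$. The discreteness of singular values of $F$ (used in the proof of Lemma \ref{lem:boundedareaofSn}) ensures that for small $\epsilon$ any $m\in B(m_0,\epsilon)$ with $F(m)\neq u_0$ is automatically a regular point of $F$. The shrinking from the proof of Lemma \ref{lem:antipody} ensures that inside $F^{-1}(u_0)\cap B(m_0,\epsilon)$ the only possible singular point of the branched minimal surface is $m_0$; since branch points of the fibre $F^{-1}(u_0)$ coincide with critical points of $F$ lying in that fibre, this means $m_0$ is the unique critical point of $F$ in $B(m_0,\epsilon)$.

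On the open set $B(m_0,\epsilon)\setminus\{m_0\}$ every point is regular, so $\bar J(m)$ is the $\mathbb P$-class of the unique (up to sign) complex structure making $V_m=\ker dF_m$ and $H_m=V_m^{\perp}$ both complex lines, and this depends continuously on $m$. I would also check compatibility with the definition via $\bar\Gamma$: at $p\in F^{-1}(u_0)\setminus\{m_0\}$ the vertical space $V_p$ equals $T_pF^{-1}(u_0)$, and the twistor lift of the smooth fibre at $p$ is exactly the complex structure making this plane a complex line, so both recipes for $\bar J(p)$ yield the same element of $\mathbb P(Z_p(M^4))$.

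The only remaining point is continuity at $m_0$, and this is where the Main Lemma does the real work. Take any sequence $m_k\to m_0$ with $m_k\in B(m_0,\epsilon)\setminus\{m_0\}$; each $m_k$ is a regular point, so equation (\ref{eq:sHJ}) gives $\gamma(m_k)=sJ(m_k)$ for the fixed sign $s$ and the almost complex structure $J$ supplied by the Main Lemma. Projecting to $\mathbb P(Z(M^4))$ kills the sign, so $\bar J(m_k)$ is the $\mathbb P$-class of $J(m_k)$. The estimate $\|J(m_k)-J_0(m_k)\|\le \useconstant{E}|m_k|$ together with continuity of the extended $J_0$ at $m_0$ forces $J(m_k)\to J_0$, hence $\bar J(m_k)\to \bar J_0=\bar J(m_0)$. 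The main obstacle is really organisational: making sure the two a priori distinct definitions of $\bar J$ (on the regular locus via $V\oplus H$, on $F^{-1}(u_0)$ via $\bar\Gamma$) glue to a single well-defined map, and that the sign $s$ in (\ref{eq:sHJ}) is globally constant on $B(m_0,\epsilon)$ (which relies on the connectedness of $B(m_0,\epsilon)\setminus F^{-1}(u_0)$ already invoked earlier).
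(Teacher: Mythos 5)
Your reduction to the isolated case breaks down at the step ``branch points of the fibre $F^{-1}(u_0)$ coincide with critical points of $F$ lying in that fibre, [so] $m_0$ is the unique critical point of $F$ in $B(m_0,\epsilon)$.'' This identification is false: a point can be a critical point of $F$ (i.e.\ $dF=0$ there) while the fibre through it is a perfectly smooth minimal surface. The paper's own example $P_0(z_1,z_2)=z_1^2$ in \S\ref{symbolandJ0} illustrates this: for a morphism modelled on $z_1\mapsto z_1^2$ the whole fibre $\{z_1=0\}$ consists of critical points, yet it has no branch point. Part 2) of Th.~\ref{thm:main} is aimed precisely at the situation where $m_0$ is \emph{not} isolated among critical points; if your reduction were valid, case 2) would follow trivially from case 1) and none of the twistor/Gromov machinery would be needed. (Your first shrinking --- points of $B(m_0,\epsilon)$ with $F(m)\neq u_0$ are regular --- is fine, since singular values are isolated; it is the second step that fails.)

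As a consequence, the heart of the lemma is left unproved: continuity of $\bar J$ at points $p_0\in F^{-1}(u_0)\cap B(m_0,\epsilon)$ other than $m_0$, where $p_0$ may itself be a critical point and $\bar J(p_0)$ is only defined through the section of $\bar\Gamma$ from Lemma~\ref{lem:antipody}. Your argument only checks continuity at $m_0$ (via the Main Lemma and (\ref{eq:sHJ}), which is essentially the content already used in Lemma~\ref{lem:nobubbles}) and on the regular locus. The paper instead verifies sequential continuity at \emph{every} point $p_0$ of the singular fibre, splitting into two cases: for sequences $p_n$ with $F(p_n)$ regular values $v_n\to u_0$, it invokes Gromov convergence of the twistor lifts of $F^{-1}(v_n)$ to $\Gamma$ and then Lemma~\ref{lem:antipody} to identify the limit with $\bar J(p_0)$; for sequences contained in $F^{-1}(u_0)$, it observes that $\bar\pi$ restricts to a continuous bijection from $\bar\Gamma\cap\bar\pi^{-1}(\bar B(m_0,\tfrac{\epsilon}{2}))$ onto $F^{-1}(u_0)\cap\bar B(m_0,\tfrac{\epsilon}{2})$, hence a homeomorphism between compact Hausdorff spaces, so preimages converge. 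Some argument of this kind at the non-regular points of the fibre is indispensable, and your proposal does not supply one.
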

\begin{proof}
Since $\bar{J}$ is continuous above $U\backslash F^{-1}(u_0)$, we consider a sequence of points $(p_n)$ in $M^4$ converging to a $p_0$ with $F(p_0)=u_0$. It is enough to the consider two cases\\
 1st case: all the $F(p_n)$'s are regular values\\
 2nd case: for every $n$, $F(p_n)=u_0$.\\
If $(p_n)$ is a general sequence, we extract subsequences of the form 1) or 2).\\
{\bf 1st case} - For every $n$, $v_n=F(p_n)$ is a regular value of $F$.\\
i) First assume that $u_n=v_n$ for every $n$. Since $\Gamma$ is the limit of the twistor lifts of the $F^{-1}(u_n)$ the sequence 
$(\bar{J}(p_n),p_n)$ converges to a point $(\bar{K},p_0)$ in $\bar{\Gamma}$; Lemma \ref{lem:antipody} ensures that $\bar{K}=\bar{J}(p_0)$.\\
ii) In the general case, the $v_n$'s converge to $u_0$ so we can proceed with the $v_n$'s as we did with the $u_n$'s and derive that the twistor lifts of the $F^{-1}(v_n)$'s converge in the sense of Gromov to the twistor lift of $F^{-1}(u_0)$ and conclude as in i).\\
{\bf 2nd case}  For every $n$, $F(p_n)=u_0$. We denote  by $\bar{\pi}$ the natural projection from $\mathbb{P}(Z(M^4))$ to $M^4$. Lemma \ref{lem:antipody} ensures that $\bar{\pi}$ restricts to a continuous bijection from $\bar{\Gamma}\cap \bar{\pi}^{-1}(\bar{B}(m_0,\frac{\epsilon}{2}))$ to $F^{-1}(u_0)\cap \bar{B}(m_0,\frac{\epsilon}{2})$; since these spaces are compact and Hausdorff, a continuous bijection between them is a homeomorphism (see for example [Han] p. 45). It follows that, if the $p_n$'s converge to $p_0$, their preimages in $\bar{\Gamma}$ converge to the preimage of $p_0$; in other words, the $\bar{J}(p_n)$'s converge to $\bar{J}(p_0)$.
\end{proof}
We conclude as in [Vi1]. We lift $\bar{J}$ above the set of regular points by taking for $J$ the one complex structure on $T_mM$ which renders $dF$ holomorphic at that point - this requirement defines it uniquely on the horizontal space $H_m$ and since, the orientation of $J$ is given, there is also a unique possibility for $J$ on $V_m$. By the same argument as in [Vi1], this extends to the entire $B(m_0,\epsilon)$.\\
This concludes the proof of Th.\ref{thm:main} 

\section{Proof of Prop. \ref{prop:majo}}\label{section:majoration}
We begin by reproducing part of Wood's arguments ([Wo]).\\
We let $m$ be a regular point of $F$ and we denote by $V_m$ (resp. $H_m$) the vertical (resp. horizontal) space at $m$. We let $S_0V_m$ be the set of symmetric trace-free holomorphisms of $V_m$ and we define the Weingarten map
$$A:H_m\longrightarrow S_0V_m$$
$$X\mapsto (U\mapsto \nabla^V_UX)$$
where $\nabla^V_UX$ denotes the vertical projection of $\nabla_UX$. \\
\newconstant{L}
At a regular point $m$, we denote by $J_+$ (resp. $J_-$) the Hermitian structure on $T_mM^4$ w.r.t. which $dF:T_mM^{4}\longrightarrow T_{F(m)}N^2$ is $\mathbb{C}$-linear and which preserves (resp. reverses) the orientation on $T_mM^4$.
If $M^4$ is Einstein, Wood proves in Prop. 3.2 that all horizontal vectors $X$ verify $$^tA\circ A(J_{\pm}X)=J(^tA\circ A)(X).$$ If $M^4$ is not Einstein, we follow his proof to derive the existence of $\useconstant{L}$ such that, for every unit horizontal vector $X$ tangent to a regular point of $F$ in $K$,
\begin{equation}\label{equation:AcircA}
\|^tA\circ A(J_{\pm}X)-J_{\pm}(^tA\circ A)(X)\|\leq\useconstant{L}
\end{equation}
We now put $T=e_1$ and we complete it into an orthonormal basis $(e_1,e_2)$ of $V_m$; we pick an orthonormal basis  $(e_3,e_4)$ of $H_m$ such that the almost complex structures verify
\begin{equation}\label{equation:expressiondeJpm}
e_2=J_+e_1=-J_-e_1\ \ \ \ \ \ \ \ \ e_4=J_+e_3=J_-e_3
\end{equation}
We let $E_1$ and $E_2$ be the following elements of $S_0V_m$ defined by their matrices in the base $(e_1,e_2)$.
$$E_1 =
 \begin{pmatrix}
  1 & 0  \\
  0 & -1 
  \end{pmatrix}\ \ \ \ E_2 =
 \begin{pmatrix}
  0 & 1  \\
  1 & 0 
  \end{pmatrix}$$
We write the matrix of $A$ in the bases $(e_3,e_4)$ and $(E_1,E_2)$
$$A =
 \begin{pmatrix}
  a & b  \\
  c & d 
  \end{pmatrix}$$
 where 
 \begin{equation}\label{equation:matricedeAab}
 a=-<\nabla_{e_1}e_1,e_3>\ \ \ \ \ \ \ \ \  b=-<\nabla_{e_1}e_1,e_4>
 \end{equation}
\begin{equation}\label{equation:matricedeAcd}
 c=-<\nabla_{e_1}e_2,e_3>\ \ \ \ \ \ \ \ \  d=-<\nabla_{e_1}e_2,e_4>
 \end{equation}
The homomorphisms $J_+$ and $J_-$ coincide on the basis $(e_3,e_4)$ (see (\ref{equation:expressiondeJpm})); we compute  
$$(^tA\circ A)J_{\pm}
-J_{\pm}(^tA\circ A) =
 \begin{pmatrix}
  2(ab+cd) & b^2+d^2-(a^2+c^2)  \\
  b^2+d^2-(a^2+c^2) & -2(ab+cd)
  \end{pmatrix}$$
  and we derive from (\ref{equation:AcircA}) 
  \begin{equation}\label{equation:AcircAbis}
  |ab+cd|\leq \useconstant{L}\ \ \ \ \ \ \ \ \ \  |b^2+d^2-(a^2+c^2)|\leq\useconstant{L}
  \end{equation}
  We take $J$ to be $J_+$ or $J_-$ and we write the Euclidean norm
  \begin{equation}\label{equation:normedenablaJ}
  \|\nabla_{e_1}J\|^2=\sum_{i,j=1,...,4}<(\nabla_{e_1}J)e_i,e_j>^2
  \end{equation}
  \begin{equation}
  =\sum_{i,j=1,...,4}(<\nabla_{e_1}(Je_i),e_j>-<J\nabla_{e_1}e_i,e_j>)^2
  \end{equation}
  \begin{equation}\label{equation:formulenablaJ1}
 =\sum_{i,j=1,...,4}(<\nabla_{e_1}(Je_i),e_j>+<\nabla_{e_1}e_i,Je_j>)^2
  \end{equation}
  It is  enough to take $e_i$ vertical and $e_j$ horizontal in (\ref{equation:formulenablaJ1}):
 \begin{lem}\label{lem:normedenablaJ1}
  $\|\nabla_{e_1}J\|^2=2\sum_{1\leq i\leq 2,3\leq j\leq 4} (<\nabla_{e_1}(Je_i),e_j>+<\nabla_{e_1}e_i,Je_j>)^2$
\end{lem}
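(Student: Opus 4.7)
The plan is to recognise the summand
$\omega_{ij}:=<\nabla_{e_1}(Je_i),e_j>+<\nabla_{e_1}e_i,Je_j>$ appearing in (\ref{equation:formulenablaJ1}) as the matrix entry $<(\nabla_{e_1}J)e_i,e_j>$ of the tensor $\nabla_{e_1}J$, and then to show that only the entries with one vertical and one horizontal index survive. Differentiating the skew-symmetry relation $<JX,Y>+<X,JY>=0$ along $e_1$ immediately gives $\omega_{ij}=-\omega_{ji}$, so the diagonal terms vanish and the sum (\ref{equation:formulenablaJ1}) already reduces to $2\sum_{i<j}\omega_{ij}^{2}$. What remains is to kill the two ``block-diagonal'' entries $\omega_{12}$ and $\omega_{34}$.

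For this I will use a second structural identity: differentiating $J^{2}=-\mathrm{Id}$ yields $(\nabla_{e_1}J)\circ J=-J\circ(\nabla_{e_1}J)$. The splitting $T_mM^4=V_m\oplus H_m$ is $J$-invariant (for any vertical $X$ the relation $dF\circ J=j\circ dF$ together with $V_m=\ker dF$ forces $JX\in V_m$, and orthogonality of $J$ then gives $JH_m=H_m$), so from (\ref{equation:expressiondeJpm}) one has $Je_1\in\{\pm e_2\}$ and $Je_3=e_4$. Applying the anti-commutation identity to $e_1$, taking the inner product with $e_1$, and using the skew pairing $<J\cdot,\cdot>=-<\cdot,J\cdot>$ will produce $\omega_{21}=\omega_{12}$; combined with $\omega_{12}=-\omega_{21}$ this forces $\omega_{12}=0$. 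The same manipulation with $e_3,e_4$ in place of $e_1,e_2$ gives $\omega_{34}=0$.

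At this point only the eight ``mixed'' entries $\omega_{ij}$ with $i$ vertical and $j$ horizontal (or vice versa) remain, and the skew-symmetry $\omega_{ij}=-\omega_{ji}$ groups them into four pairs of equal squared value, producing the factor $2$ and the formula $\|\nabla_{e_1}J\|^{2}=2\sum_{1\leq i\leq 2,\,3\leq j\leq 4}\omega_{ij}^{2}$ claimed by the lemma. I do not expect any real obstacle here: the only care needed is to carry out the sign bookkeeping for $J_+$ and $J_-$ simultaneously via (\ref{equation:expressiondeJpm}), which is harmless because every identity used is quadratic in $J$ and thus insensitive to the overall sign of $J$ on $V_m$.
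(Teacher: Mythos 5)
Your proof is correct, and its skeleton matches the paper's: both start from (\ref{equation:formulenablaJ1}), observe that the summand is $<(\nabla_{e_1}J)e_i,e_j>$, kill the entries where $e_i,e_j$ lie in the same ($J$-invariant) block, and use antisymmetry of the mixed entries to produce the factor $2$. The difference is in how the same-block vanishing is obtained: the paper quotes Prop.\ 2.5.16 i) of [B-W] (with the added remark that its proof, stated for horizontal vectors, works verbatim for vertical ones), whereas you derive it from scratch by differentiating $J^2=-\mathrm{Id}$ to get $(\nabla_{e_1}J)J=-J(\nabla_{e_1}J)$ and combining this with the skew-adjointness of $\nabla_{e_1}J$ (itself obtained from $\nabla g=0$ and $<JX,Y>=-<X,JY>$, valid near regular points where $J=J_\pm$ is Hermitian). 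Your anticommutation-plus-skewness manipulation does give $\omega_{12}=\omega_{21}$, hence $\omega_{12}=0$, and likewise $\omega_{34}=0$; equivalently one can note directly that $<(\nabla_{e_1}J)X,JX>=0$ for every $X$, which handles both blocks at once. What your route buys is self-containedness and generality: the lemma is seen to be a purely algebraic identity valid for any almost Hermitian structure whose action preserves the splitting $T_mM^4=V_m\oplus H_m$ (which you correctly justify from $dF\circ J=j\circ dF$ and $V_m=\ker dF$), with no appeal to the harmonic-morphism literature at this step; the paper's citation is shorter but imports an external result and needs the horizontal-to-vertical extension remark.
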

\begin{proof}
  If $e_i$ and $e_j$ are both horizontal or both vertical, Prop. 2.5.16 i) of [B-W] yields
  \begin{equation}\label{equation:bothvertorhor}
  <\nabla_{e_1}(Je_i),e_j>=<J\nabla_{e_1}e_i,e_j> 
  \end{equation}
  Note that Baird-Wood's Prop. 2.5.16 is about horizontal vectors, but its proof works identically for vertical vectors.\\
  Assume now that $e_i$ is horizontal and $e_j$ is vertical: 
  \begin{equation}\label{equation:vertiandhori}
  <\nabla_{e_1}(Je_i),e_j>+ <\nabla_{e_1}e_i,Je_j>=-<Je_i,\nabla_{e_1}e_j>-<e_i,\nabla_{e_1}(Je_j)>
  \end{equation}
  Putting together (\ref{equation:formulenablaJ1}), (\ref{equation:bothvertorhor}) and (\ref{equation:vertiandhori}) completes the proof of Lemma \ref{lem:normedenablaJ1}.
\end{proof}
We use the values given for the $J_{\pm}$ in (\ref{equation:expressiondeJpm}) to derive $$\frac{1}{2}\|\nabla_{e_1}J_{\pm}\|^2=(\pm<\nabla_{e_1}e_2,e_3>+<\nabla_{e_1}e_1,e_4>)^2$$
$$+
(\pm<\nabla_{e_1}e_2,e_4>-<\nabla_{e_1}e_1,e_3>)^2$$
$$+(\pm<\nabla_{e_1}e_1,e_3>-<\nabla_{e_1}e_2,e_4>)^2$$
\begin{equation}\label{equation:grosseformule}
+(\pm<\nabla_{e_1}e_1,e_4>+<\nabla_{e_1}e_2,e_3>)^2
\end{equation}
We rewrite (\ref{equation:grosseformule}) in terms of the coefficients $a,b,c,d$ of the matrix $A$ introduced above (see (\ref{equation:matricedeAab}) and (\ref{equation:matricedeAcd})); we get after a short computation 
\begin{equation}
\|\nabla_{e_1}J_+\|^2=4[(a-d)^2+(b+c)^2]=4[a^2+b^2+c^2+d^2-2(ad-bc)]
\end{equation}
\begin{equation}
 \|\nabla_{e_1}J_-\|^2=4[(a+d)^2+(b-c)^2]=4[a^2+b^2+c^2+d^2+2(ad-bc)]
\end{equation}
hence 
\begin{equation}\label{equation:produitdesnablaJ}
\|\nabla_{e_1}J_+\|^2\|\nabla_{e_1}J_-\|^2=16[(a^2+b^2+c^2+d^2)^2-4(ad-bc)^2]
\end{equation}
We now bound  (\ref{equation:produitdesnablaJ}) using (\ref{equation:AcircAbis}). To this effect we put
\begin{equation}\label{equation:R1R2}
a=R_1\cos\theta\ \ \ \ \    c=R_1\sin\theta\ \ \ \ \   b=R_2\cos\alpha\ \ \ \ \   d=R_2\sin\alpha
\end{equation}
and we rewrite (\ref{equation:produitdesnablaJ}) as 
\begin{equation}
\frac{1}{16}\|\nabla_{e_1}J_+\|^2\|\nabla_{e_1}J_-\|^2=(R_1^2+R_2^2)^2-4R_1^2R_2^2\sin^2(\theta-\alpha)
\end{equation}
\begin{equation}
=(R_1^2+R_2^2)^2-4R_1^2R_2^2+4R_1^2R_2^2\cos^2(\theta-\alpha)
\end{equation}
\begin{equation}\label{equation:conclusion}
=(R_1^2-R_2^2)^2+4R_1^2R_2^2\cos^2(\theta-\alpha)
\end{equation}
We now rewrite (\ref{equation:AcircAbis}) as
\begin{equation}\label{equation:majodesRi}
|R_1R_2\cos(\theta-\alpha)|\leq \useconstant{L} \ \ \ \ \ \ \ \ \ |R_1^2-R_2^2|\leq \useconstant{L}
\end{equation}
and this allows us to bound (\ref{equation:conclusion}) and conclude the proof of of Prop. \ref{prop:majo}.\\
$\square$

\bigskip
\footnotesize{Ali.Makki@lmpt.univ-tours.fr,\ \ Marina.Ville@lmpt.univ-tours.fr\\LMPT,
 Universit\'e de Tours
 UFR Sciences et Techniques
 Parc de Grandmont
 37200 Tours, FRANCE}

\end{document}